\newtheorem{theorem}{Theorem}[section]      %defines the theorem environment
\newtheorem{definition}[theorem]{Definition} %defines the definition environment
\newtheorem{lemma}[theorem]{Lemma}       %defines the lemma environment
\newtheorem{proposition}[theorem]{Proposition}
\newtheorem{corollary}[theorem]{Corollary}
\newtheorem*{convention*}{Convention}
\newcommand{\A}{\mathcal A}
\newcommand{\B}{\mathcal B}
\newcommand{\D}{\mathcal D}
\newcommand{\G}{\mathcal G}
\newcommand{\F}{\mathcal F}
\newcommand{\N}{\mathbb N}
\renewcommand{\P}{\mathcal P}
\renewcommand{\S}{\mathcal S}
\begin{document}

%the top matter
\title{Unary Automatic Graphs: \ \\  An Algorithmic Perspective}
\author[B. Khoussainov, J. Liu and M. Minnes]{Bakhadyr
Khoussainov$^1$, Jiamou Liu$^1$ and Mia Minnes$^2$\\
$^1$ Department of Computer Science, University of Auckland,
Auckland, New Zealand.\\ $^2$ Department of
Mathematics, Cornell University, Ithaca, NY, USA.}

%\date{\today}

\begin{abstract}
 This paper studies infinite graphs
produced from a natural unfolding operation applied to finite
graphs. Graphs produced via such operations are of finite degree
and automatic over the unary alphabet (that is, they can be
described by finite automata over unary alphabet). We investigate
algorithmic properties of such unfolded graphs given their finite
presentations. In particular, we ask whether a given node belongs
to an infinite component, whether two given nodes in the graph are
reachable from one another, and whether the graph is connected. We
give polynomial-time algorithms for each of these questions. For a
fixed input graph, the algorithm for the first question is in
constant time and the second question is decided using an
automaton that recognizes reachability relation in a uniform way.
Hence, we improve on previous work, in which non-elementary or
non-uniform algorithms were found.
\end{abstract}

\maketitle

\section{Introduction}

We study the algorithmic properties of infinite graphs that result
from a natural unfolding operation applied to finite graphs. The
unfolding process always produces infinite graphs of finite
degree. Moreover, the class of resulting graphs is a subclass of
the class of automatic graphs. As such, any element of this class
possesses all the known algorithmic and algebraic properties of
automatic structures. An equivalent way to describe these graphs
employs automata over a unary alphabet (see Theorem
\ref{thm:Gsigma}). Therefore, we call this class of graphs
\emph{unary automatic graphs of finite degree}.

In recent years there has been increasing interest in the study of
structures that can be presented by automata. The underlying idea
in this line of research  consists of using automata (such as word
automata,  B\"uchi automata, tree automata, and Rabin automata) to
represent structures and study logical and algorithmic
consequences of such presentations. Informally, a structure
$\A=(A; R_0, \ldots, R_m)$ is {\em automatic} if the domain $A$
and all the relations $R_0$, $\ldots$, $R_m$ of the structure are
recognized by finite automata (precise definitions are in the next
section). For instance, an automatic graph is one whose set of
vertices and set of edges can each be recognized by finite
automata. The idea of automatic structures was initially
introduced by Hodgson \cite{hodgson} and was later rediscovered by
Khoussainov and Nerode \cite{kn}. Automatic structures possess a
number of nice algorithmic and model-theoretic properties.  For
example, Khoussainov and Nerode proved that the first-order theory
of any automatic structure is decidable \cite{kn}. This result is
extended  by adding the $\exists^{\infty}$ (there are infinitely
many) and $\exists^{n,m}$ (there are $m$ many mod $n$) quantifiers
to the first order logic \cite{gradel, krs}. Blumensath and
Gr\"{a}del proved a  logical characterization theorem stating that
automatic structures are exactly those definable in the following
fragment of the arithmetic $(\omega; +, |_2, \leq, 0)$, where $+$
and $\leq$ have their usual meanings and $|_2$ is a weak
divisibility predicate for which $x|_2 y$ if and only if $x$ is a
power of $2$ and divides $y$ \cite{gradel}. Automatic structures
are closed under first-order interpretations. There are
descriptions of automatic linear orders and trees in terms of
model theoretic concepts such as Cantor-Bendixson ranks
\cite{rubin}. Also, Khoussainov, Nies, Rubin and Stephan have
characterized
 the isomorphism types of automatic Boolean
algebras \cite{kns}; Thomas and Oliver have given a full
description of finitely generated automatic groups \cite{oliver
thomas}. Some of these results have direct algorithmic
implications. For example, isomorphism problem for automatic
well-ordered sets and Boolean algebras is decidable \cite{kns}.

\smallskip

There is also a body of work devoted to the study  of
resource-bounded complexity of the first order theories of
automatic structures. For example, on the one hand, Gr\"{a}del and
Blumensath constructed examples of automatic structures whose
first-order theories are non-elementary \cite{gradel}. On the
other hand, Lohrey in \cite{lohrey} proved that the
first-order theory of any automatic graph of bounded degree is
elementary.  It is worth noting that when both a first-order
formula and an automatic structure $\A$ are fixed, determining if
a tuple $\bar{a}$ from $\A$ satisfies $\phi(\bar{x})$ can be done
in linear time.

\smallskip

Most of the results about automatic structures,  including the
ones mentioned above, demonstrate that in various concrete senses
automatic structures are not complex from a logical point of view.
However, this intuition can be misleading. For example, in
\cite{kns} it is shown that the isomorphism problem for automatic
structures is $\Sigma_1^1$-complete. This informally tells us that
there is no hope for a description (in a natural logical language)
of the isomorphism types of automatic structures. Also,
Khoussainov and Minnes \cite{km} provide examples of automatic
structures whose Scott ranks can be as high as possible, fully
covering the interval $[1,\omega_1^{CK}+1]$ of ordinals (where
$\omega_1^{CK}$ is the first non-computable ordinal).  They also
show that the ordinal heights of well-founded automatic relations
can be arbitrarily large ordinals below $\omega_1^{CK}$.

\smallskip

In this paper, we study the class of unary automatic graphs of
finite degree. Since these graphs are described by the unfolding
operation (Definition \ref{dfn: unfolding}) on the pair of finite
graphs $(\D, \F)$, we use this pair to represent the graph. The
size of this pair is the sum of the sizes of the automata  that
represent these graphs. In the study of algorithmic properties of
these graphs one directly deals with the pair $(\D, \F)$. We are
interested in the following natural decision problems:

\begin{itemize}
\item {\bf Connectivity Problem}. Given an automatic  graph $\G$,
decide if $\G$ is connected. \item {\bf Reachability Problem}.
Given an automatic graph $\G$  and two vertices $x$ and $y$ of the
graph, decide if there is a path from $x$ to $y$.
\end{itemize}

If we restrict to the class of finite graphs, these two problems
are decidable and can be solved in linear time on the sizes of the
graphs.  However, we are interested in infinite graphs and
therefore much more work is needed to investigate the problems
above.  In addition, we also pose the following two problems:

\begin{itemize}

\item  {\bf Infinity Testing Problem}. Given an  automatic graph
$\G$ and a vertex $x$, decide if the component of $\G$ containing
$x$ is infinite.

\item {\bf Infinite Component Problem}. Given an automatic graph
$\G$ decide if $\G$ has an infinite component.

\end{itemize}

Unfortunately, for the class of automatic graphs all of the above
problems are undecidable. In fact, one can provide exact bounds on
this undecidability. The connectivity problem is
$\Pi_2^0$-complete; the reachability problem is
$\Sigma_1^0$-complete; the infinite component problem is
$\Sigma_3^0$-complete; and the infinity testing problem is
$\Pi_2^0$-complete \cite{rubin}.

Since all unary automatic structures are first-order definable in
$S1S$ (the monadic second-order logic of the successor function),
it is not hard to prove that all the problems above are decidable
\cite{Blumensath, rubin}. Direct constructions using this
definability in $S1S$ yield algorithms with non-elementary time
since one needs to transform $S1S$ formulas into automata
\cite{buchi}.  However,  we provide polynomial-time algorithms for
solving all the above problems for this class of graphs. We now
outline the rest of this paper by explaining the main results.
We comment that these polynomial-time algorithms are based on
deterministic input automata.

\smallskip

Section 2 introduces the main definitions needed, including the
concept of automatic structure. Section 3 singles out unary
automatic graphs and provides a characterization theorem (Theorem
\ref{thm:characterization}).
Section 4 introduces unary automatic graphs of finite degree. The
main result is Theorem \ref{thm:Gsigma} that explicitly provides
an algorithm for building unary automatic graphs of finite degree.
This theorem is used throughout the paper. Section 5 is devoted to
deciding the infinite component problem. The main result is the
following:

\smallskip

\noindent{\bf Theorem \ref{thm:infinite component}} {\em The
infinite component problem for unary automatic graph of finite
degree $\G$ is solved in $O(n^3)$, where $n$ is the number of
states of the deterministic finite automaton recognizing $\G$.}

\smallskip

In this section, we make use of the concept of oriented walk for
finite directed graphs. The subsequent section is devoted to
deciding the infinity testing problem. The main result is the
following:

\smallskip

\noindent{\bf Theorem \ref{thm:InfTest}} \ {\em The infinity
testing problem for unary automatic graph of finite degree $\G$ is
solved in $O(n^3)$, where $n$ is the number of states of the deterministic finite automaton $\A$ recognizing $\G$. In particular, when $\A$ is
fixed, there is a constant time algorithm that decides the
infinity testing problem on $\G$.}

\smallskip

The fact that there is a constant time algorithm when $\A$ is
fixed will be made clear in the proof. The value of the constant
is polynomial in the number of states of $\A$.

\smallskip

The reachability problem is addressed in Section 7. This problem
has been studied in \cite{bouajjani},\cite{Esparza}, \cite{Thomas02} via the
class of  {\bf pushdown graphs}. A pushdown graph is the
configuration space of a pushdown automaton. Unary automatic
graphs are pushdown graphs \cite{Thomas02}. In \cite{bouajjani,
Esparza, Thomas02} it is proved that for a pushdown graph $\G$,
given a node $v$, there is an automaton that recognizes all nodes
reachable from $v$. The number of states in the automaton depends
on the input node $v$. This result implies that there is an
algorithm that decides the reachability problem on unary automatic
graphs of finite degree. However, there are several issues with
this algorithm. The automata constructed by the algorithm are not
uniform in $v$ in the sense that different automata are built for
different input nodes $v$. Moreover, the automata are
nondeterministic. Hence, the size of the deterministic equivalent
automata is exponential in the size of the representation of $v$.
Section 7 provides an alternative algorithm to solve
 the reachability problem on unary automatic graphs of finite degree uniformly.
This new algorithm constructs a deterministic automaton
$\A_{Reach}$ that accepts the set of pairs $\{(u,v) \mid $ there
is a path from $u$ to $v\}$.  The size of $\A_{Reach}$ only
depends on the number of states of the automaton $n$, and
constructing the automaton requires polynomial-time in $n$. The
practical advantage of such a uniform solution is that, when
$\A_{Reach}$ is built, deciding whether node $v$ is reachable from
$u$ by a path takes only linear time (details are in Section 7).
The main result of this section is the following:

\smallskip

\noindent{\bf Theorem \ref{thm:reachability}} \ {\em Suppose $\G$
is a unary automatic graph of finite degree represented by deterministic finite
automaton $\A$ of size $n$. There exists a polynomial-time
algorithm that solves the reachability problem on $\G$. For inputs
$u,v$, the running time of the algorithm is $O(|u| + |v| + n^4)$.}

\smallskip

Finally, the last  section  solves the connectivity
problem for $\G$.

\smallskip

\noindent{\bf Theorem \ref{thm:connectivity}} {\em The
connectivity problem for unary automatic graph of finite degree
$\G$ is solved in $O(n^3)$, where $n$ is the number of states of
the deterministic finite automaton recognizing $\G$.}

The authors would like to thank referees for comments on improvement of this paper.

\section{Preliminaries}

A {\bf finite automaton} $\A$ over an alphabet $\Sigma$ is a tuple
$(S,\iota,\Delta,F)$, where $S$ is a finite set of {\bf states},
$\iota \in S$ is the {\bf initial state}, $\Delta \subset S \times
\Sigma \times S$ is the {\bf transition table} and $F \subset S$
is the set of {\bf final states}. A {\bf computation} of $\A$ on a
word $\sigma_1 \sigma_2 \dots \sigma_n$ ($\sigma_i \in \Sigma$) is
a sequence of states, say $q_0,q_1,\dots,q_n$, such that $q_0 =
\iota$ and $(q_i,\sigma_{i+1},q_{i+1}) \in \Delta$ for all $i \in
\{0,1,\ldots,n-1\}$.  If $q_n \in F$, then the computation is {
\bf successful} and we say that automaton $\A$ {\bf accepts} the
word. The {\bf language} accepted by the automaton $\A$ is the set
of all words accepted by $\A$. In general, $D \subset
\Sigma^{\star}$ is {\bf FA recognizable}, or {\bf regular}, if $D$
is the language accepted by some finite automaton. In this paper
we always assume the automata are deterministic.  For two states
$q_0, q_1$, the {\bf distance} from $q_0$ to $q_1$ is the minimum
number of transitions required for $\A$ to go from $q_0$ to $q_1$.

\smallskip

To formalize the notion of a relation being recognized by an
automaton, we define synchronous $n$-tape automata.  Such an
automaton can be thought of as a one-way Turing machine with $n$
input tapes.  Each tape is semi-infinite having written on it a
word in the alphabet $\Sigma$ followed by a succession of
$\diamond$ symbols. The automaton starts in the initial state,
reads simultaneously the first symbol of each tape, changes state,
reads simultaneously the second symbol of each tape, changes
state, etc., until it reads $\diamond$ on each tape. The automaton
then stops and accepts the $n$-tuple of words if and only if it is
in a final state.

\smallskip

More formally, we write $\Sigma_{\diamond}$ for $\Sigma \cup
\{\diamond\}$ where $\diamond$ is a symbol not in $\Sigma$. The
{\bf convolution} of a tuple $(w_1,\cdots,w_n) \in \Sigma^{\star
n}$ is the string $\otimes(w_1,\cdots,w_n)$ of length $\max_i
|w_i|$ over the alphabet $(\Sigma_{\diamond})^n$ which is defined
as follows: the $k^{th}$ symbol is $(\sigma_1,\ldots,\sigma_n)$
where $\sigma_i$ is the $k^{th}$ symbol of $w_i$ if $k \leq
|w_i|$, and is $\diamond$ otherwise.  The {\bf convolution} of a
relation $R \subset \Sigma^{\star n}$ is the relation $\otimes R
\subset (\Sigma_{\diamond})^{n \star}$ formed as the set of
convolutions of all the tuples in $R$.  
%An {\em \bf $n$-tape automaton} on $\Sigma$ is a finite automaton over the alphabet $(\Sigma_{\diamond})^n$.  
An $n$-ary relation $R \subset
\Sigma^{\star n}$ is {\bf FA recognizable}, or {\bf regular}, if
its convolution $\otimes R$ is recognizable by a finite
automaton.

\smallskip

A {\bf structure} $\S$ consists of a countable set $D$ called the
{\bf domain} and some relations and operations on $D$. We may
assume that $\S$ only contains relational predicates since
operations can be replaced with their graphs.  We write
$\S=(D,R_1^D, \ldots, R_k^D, \ldots )$ where $R_i^D$ is an
$n_i$-ary relation on $D$. The relation $R_i$ are sometimes called
basic or atomic relations. We assume that the function $i\mapsto
n_i$ is always a computable one. A structure $\S$ is {\bf
automatic over alphabet $\Sigma$} if its domain $D \subset
\Sigma^{\star}$ is finite automaton recognizable, and there is an
algorithm that for each $i$ produces an $n_{i}$-tape automaton
recognizing the relation $R_i^D \subset (\Sigma^{\star})^{n_i}$. A
structure is called {\bf automatic} if it is automatic over some
alphabet. If $\B$ is isomorphic to an automatic structure $\S$,
then we call $\S$ an {\bf automatic presentation} of $\B$ and say
that $\B$ is {\em \bf automatically presentable}.

\smallskip

An example of an automatic structure is the word structure
$(\{0,1\}^\star, L,R, E,\preceq)$, where for all $x,y \in
\{0,1\}^\star$, $L(x)=x0$, $R(x)=x1$, $E(x,y)$  if and only if
$|x|=|y|$, and $\preceq$ is the lexicographical order. The
configuration graph of any Turing machine is another example of an
automatic structure. Examples of automatically presentable
structures are $(\N, +)$, $(\N, \leq)$, $(\N, S)$,  the group
$(Z,+)$, the order on the rational $(Q, \leq)$, and the Boolean
algebra of finite and co-finite subsets of $\N$. Consider the
first-order logic extended by $\exists^{\omega}$ (there exist
infinitely many) and $\exists^{n,m}$ (there exist $n$ many mod
$m$, where $n$ and $m$ are natural numbers) quantifiers. We denote
this logic by $FO+\exists ^{\omega} +\exists^{n,m}$. We will use
the following theorem without explicit reference to it.

\begin{theorem} {\rm \cite{kn}}
Let $\A$ be an automatic structure. There exists an algorithm
that, given a formula $\phi(\bar{x})$ in $FO+\exists ^{\omega}
+\exists^{n,m}$, produces an automaton that recognizes exactly
those tuples $\bar{a}$ from the structure that make $\phi$ true.
In particular, the set of all sentences of $FO+\exists ^{\omega}
+\exists^{n,m}$ which are true in $\A$ is decidable.
\end{theorem}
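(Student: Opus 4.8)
The plan is to prove this by induction on the structure of $\phi$, maintaining the invariant that for every subformula $\psi(\bar y)$, with the free variables $\bar y$ displayed, there is an effectively constructible finite automaton recognizing the convolution $\otimes\{\bar a \st \A \models \psi(\bar a)\}$. Establishing this invariant amounts to verifying that the class of FA-recognizable relations over $\Sigma$ is closed, \emph{effectively}, under each logical operation that can occur in $\phi$. The decidability statement is then an immediate corollary: a sentence has no free variables, so its associated automaton is one that reads only the empty convolution, and one simply checks whether it accepts.

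For the base case I would treat atomic formulas. Each atomic relation $R_i$ is recognizable by the definition of an automatic structure, and equality $x_i = x_j$ is recognizable by a simple two-tape automaton. The remaining work is purely combinatorial bookkeeping on the variables: one must be able to permute the coordinates of a recognizable relation, to adjoin \emph{dummy} coordinates (cylindrification), and to identify two coordinates (diagonalization), all while preserving recognizability. Each of these is a standard effective closure property, obtained by relabeling the product alphabet $(\Sigma_{\diamond})^n$ and rewiring the transition table; the only delicate point is the treatment of the padding symbol $\diamond$, since distinct variables may be instantiated by words of different lengths and the convolution pads the shorter tracks.

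In the inductive step, the Boolean connectives are handled by the effective closure of regular relations under complement (taken relative to the recognizable domain $D$), union, and intersection, after first applying cylindrification so that the two operand relations are presented on a common tuple of free variables. The ordinary existential quantifier $\exists y\,\psi(\bar x, y)$ corresponds to deleting the $y$-track from the convolution automaton, which yields a nondeterministic automaton that I then determinize. The technical subtlety here is, once again, the padding: if a witness $y$ is longer than every $x_i$, then after removing the $y$-track the surviving $x$-tracks carry spurious trailing $\diamond$ symbols, so the projected automaton must be allowed to absorb such trailing padding before accepting.

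The main obstacle is the two counting quantifiers, which are not among the standard closure properties of regular relations. For $\exists^{\omega} y\,\psi(\bar x, y)$, I would fix $\bar x$ and note that the section $\{y \st \A \models \psi(\bar x, y)\}$ is regular, and that it is infinite exactly when the convolution automaton, once the $\bar x$-tracks are exhausted and it reads only symbols of the form $(\diamond, \ldots, \diamond, \sigma)$ on the lone $y$-track, can traverse a cycle on its way to a final state. Whether a given state of the automaton admits such a tail-looping accepting continuation is a fixed, computable property of the underlying transition graph; so the desired automaton for $\{\bar x \st \exists^{\omega} y\,\psi\}$ is a subset-construction machine over the $x$-tracks that records the set of states reachable over all choices of $y$-prefix and accepts when this set meets the tail-looping states. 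For $\exists^{n,m} y\,\psi(\bar x, y)$ the idea is analogous but one must count witnesses modulo $m$ rather than merely detect unboundedly many: I would take a product of the $y$-track analysis with the cyclic monitor $\mathbb{Z}_m$, tracking the number of accepting completions modulo $m$, and combine this with the $\exists^{\omega}$ construction to dispose of the case of infinitely many witnesses. Checking that these tail analyses are uniform in $\bar x$ and can be assembled into a single finite automaton for each quantifier is the heart of the argument.
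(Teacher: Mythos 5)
The paper does not actually prove this statement: it is imported verbatim from the literature (the theorem is credited to \cite{kn}, and the extension to the quantifiers $\exists^{\omega}$ and $\exists^{n,m}$ to \cite{gradel, krs}), and the paper only uses it as a black box. So the comparison here is with the standard proof in those references, and your proposal is essentially that proof: induction on formulas, with effective closure of regular relations under the Boolean operations, cylindrification/permutation/identification of coordinates, and projection for $\exists$ (with the correct attention to trailing $\diamond$-padding after a track is deleted). Your treatment of $\exists^{\omega}$ is also the standard one: infinitude of the regular section $\{y \st \A \models \psi(\bar a, y)\}$ is equivalent to the existence of arbitrarily long witnesses, hence to reaching, after the $\bar x$-tracks are exhausted, a state from which an accepting tail-cycle can be pumped, and this is detectable by a subset construction uniformly in $\bar x$.

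One point is underspecified and is where a careless implementation would fail: for $\exists^{n,m}$, a subset construction composed with a single counter in $\mathbb{Z}_m$ is not enough, because the subset construction forgets multiplicities. To count witnesses modulo $m$ you must carry, for each state $q$ of the automaton for $\psi$, the number modulo $m$ of $y$-prefixes whose run currently sits at $q$ (i.e., a state vector in $\mathbb{Z}_m^{S}$, split into prefixes of $y$ that are still extendable and words $y$ that have already ended and are being padded), and at the end combine these per-state counts with the number, modulo $m$, of accepting tail-continuations from each state; the subset (reachability) information must be kept in parallel, since a count of $0$ modulo $m$ does not mean unreachable, and reachability of a state with infinitely many accepting tails is exactly what must trigger the ``infinitely many witnesses'' case that you correctly delegate to the $\exists^{\omega}$ analysis. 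With that refinement your argument is complete and matches the proof in the cited sources; nothing in it conflicts with how the present paper uses the theorem.
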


\section{Unary automatic graphs}

We now turn our attention to the subclass of the automatic
structures which is the focus of the paper.

\begin{definition}
A structure $\A$ is {\bf unary automatic} if it has an automatic
presentation whose domain is $1^\star$ and whose relations are
automatic.
\end{definition}

Examples of unary automatic structures are $(\omega, S)$ and
$(\omega, \leq)$. Some recent work on unary automatic structures
includes a characterization of unary automatic linearly ordered
sets, permutation structures, graphs, and equivalence structures
\cite{sasha-bakh, Blumensath}. For example, unary automatic
linearly ordered sets are exactly those that are isomorphic to a
finite sum of orders of type $\omega$, $\omega^\star$ (the order
of negative integers), and finite $n$.

\smallskip

\begin{definition} A {\bf unary automatic graph} is a graph $(V, E)$ whose
domain is $1^\star$, and whose edge relation $E$ is regular.
 \end{definition}

We use the following example to illustrate that this class of graphs are the best possible. Consider the class of graphs with all vertices being of the form $1^*2^*$ for some alphabet $\Sigma = \{1,2\}$. At first sight, graphs of this form may have an intermediate position between unary and general automatic graphs. However, the infinite grid $G_2 = \{\N\times \N, \{((i,j),(i,j+1)) \mid i,j\in \N\}, \{((i,j), (i+1,j)) \mid i,j\in \N\}\}$ can be coded automatically over $1^*2^*$ by $(i,j) \rightarrow 1^i2^j$, and $MSO(G_2)$ is not decidable \cite{wohrle04}. In particular, counter machines can be coded into the grid, so the reachability problem is not decidable.

\begin{convention*} To eliminate bulky exposition, we make the following assumptions
in the rest of the paper.
\begin{itemize}

\item The automata under consideration are viewed as deterministic. Hence, when we write \textquotedblleft automata \textquotedblright, we mean \textquotedblleft deterministic finite automata\textquotedblright.

%We restrict our attention to deterministic unary automatic structures;
%that is, those structure recognized by deterministic unary automata.
%Hence, when we write \textquotedblleft automatic
%structure\textquotedblright, we mean \textquotedblleft deterministic unary
%automatic structure\textquotedblright.

\item All structures are infinite unless explicitly specified.

%\item The domains of unary automatic structures coincide with the
%set $1^\star$ of all unary strings $\{\lambda, 1,11,111,
%\ldots\}$. All the notions and results below can be adapted to the
%case when the domains are regular subsets of $1^\star$ \footnote{A
%language $L\subseteq 1^\star$ is regular if and only if $L$ is a
%union of a finite languages together with a language of a fixed
%period. This observation should be used for the cases when the
%domains of the structures do not coincide with $1^\star$}.

\item The graphs are undirected. The case of directed graphs can
be treated in a similar manner.
\end{itemize}
\end{convention*}

\begin{figure}[h]
\begin{center}
\input{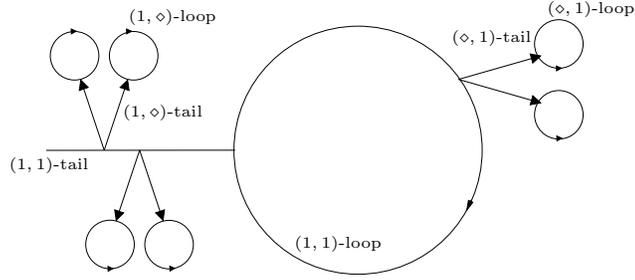}
%\resizebox{!}{3.5cm}{\includegraphics{UnaryGraphAut.eps}}
\caption{\label{fg:standard}A Typical Unary Graph Automaton}
\end{center}
\end{figure}

Let $\G=(V,E)$ be an automatic graph. Let $\A$ be an automaton
recognizing $E$.  We establish some terminology for the automaton
$\A$. The general shape of $\A$ is given in Figure
\ref{fg:standard}. All the states reachable from the initial state
by reading inputs of type $(1,1)$ are called {\bf $(1,1)$-states}.
A {\bf tail} in $\A$ is a sequence of states linked by transitions
without repetition. A {\bf loop} is a sequence of states linked by
transitions such that the last state coincides with the first one,
and with no repetition in the middle. The set of $(1,1)$-states is
a disjoint union of a tail and a loop. We call the tail the {\bf
$(1,1)$-tail} and the loop the {\bf $(1,1)$-loop}. Let $s$ be a
$(1,1)$ state. All the states reachable from $s$ by reading inputs
of type $(1,\diamond)$ are called {\bf $(1,\diamond)$-states}.
This collection of all $(1,\diamond)$-states is also a disjoint
union of a tail and a loop (see the figure), called the {\bf
$(1,\diamond)$-tail} and the {\bf $(1,\diamond)$-loop},
respectively. The {\bf $(\diamond,1)$-tails} and {\bf
$(\diamond,1)$-loops} are defined in a similar matter.

\smallskip

We say that an automaton is {\bf standard} if the lengths of all
its loops and tails equal some number $p$, called the \textbf{loop
constant}. If $\A$ is a standard automaton recognizing a
binary relation, it has exactly $2p$ $(1,1)$-states. On each of
these states, there is a $(1, \diamond)$-tail and a $(\diamond,
1)$-tail of length exactly $p$. At the end of each
$(1,\diamond)$-tail and $(\diamond,1)$-tail there is a
$(1,\diamond)$-loop and $(\diamond, 1)$-loop, respectively, of
size exactly $p$. Therefore if $n$ is the number of states in
$\A$, then $n = 8p^2$.

\begin{lemma}\label{lemma:standard}
Let $\A$ be an $n$ state automaton recognizing a binary relation
$E$ on $1^\star$. There exists an equivalent standard
automaton with at most $8 n^{2n}$ states.
\end{lemma}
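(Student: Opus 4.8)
The plan is to leave the qualitative shape of $\A$ untouched and merely \emph{normalize} the lengths of all its tails and loops by unrolling, choosing a single loop constant $p$ that simultaneously works for every tail and every loop. First I would record the relevant state sequences. Since $\A$ is deterministic, reading $(1,1)$ repeatedly from the initial state produces an eventually periodic sequence of states $a_0, a_1, a_2, \ldots$; as there are only $n$ states, the $(1,1)$-tail has some length $t_0 \le n$ and the $(1,1)$-loop some length $\ell_0 \le n$, with $a_{i+\ell_0} = a_i$ for all $i \ge t_0$. For each $(1,1)$-state $s$, reading $(1,\diamond)$ (respectively $(\diamond,1)$) repeatedly from $s$ likewise yields an eventually periodic sequence whose tail and loop lengths are again bounded by $n$. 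Thus every tail length occurring in $\A$ is at most $n$ and every loop length is at most $n$.

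Next I would fix the loop constant $p = \mathrm{lcm}(1, 2, \ldots, n)$ (the choice $p = n!$ serves equally well). Then $p \ge n$, so $p$ dominates every tail length, and $p$ is a common multiple of every integer in $\{1, \ldots, n\}$, so it is divisible by every loop length. These are exactly the two arithmetic conditions that make unrolling to length $p$ consistent. I would then build the standard automaton $\A'$ from fresh copies of states indexed by position: the $(1,1)$-part is a tail $v_0 \to v_1 \to \cdots \to v_{p-1}$ followed by a loop $v_p \to \cdots \to v_{2p-1} \to v_p$, where $v_i$ is declared to \emph{simulate} the old state $a_i$ (well defined for every $i \ge 0$ via the eventual periodicity). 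From each $v_i$ I attach a private $(1,\diamond)$-part and $(\diamond,1)$-part, each again a tail of length $p$ feeding a loop of length $p$, whose positions simulate the corresponding $(1,\diamond)$- and $(\diamond,1)$-sequences issuing from the old state $a_i$. A new state is declared final exactly when the old state it simulates is final.

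The equivalence $L(\A') = L(\A)$ is the step that needs care. The key verification is that along a loop of $\A'$ of length $p$, reading one more symbol always carries the copy simulating $a_k$ to the copy simulating $a_{k+1}$, even after wrapping around the loop: this is precisely where $p \ge t_0$ (so the entire loop region lies in the periodic part) and $\ell_0 \mid p$ (so a shift by $p$ preserves the residue $\bmod\ \ell_0$, whence $a_{k} = a_{k+p}$ for $k \ge t_0$) are used. The same congruence argument applies verbatim to every $(1,\diamond)$- and $(\diamond,1)$-branch. Consequently, on each valid input $\A'$ reaches a copy of exactly the state that $\A$ reaches, and since final status is inherited, the two automata accept the same convolutions.

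Finally I would count states: $\A'$ has $2p$ states in its $(1,1)$-part, and each of these carries $4p$ further states (a $(1,\diamond)$-tail, a $(1,\diamond)$-loop, a $(\diamond,1)$-tail and a $(\diamond,1)$-loop, each of length $p$), giving $2p + 8p^2$ in total. Using the standard estimate $\mathrm{lcm}(1,\ldots,n) \le n^{\pi(n)} \le n^{\,n-1}$ (or $n! \le n^{\,n}$), this is at most $8 n^{2n}$, with the few smallest values of $n$ checked by hand; and since all tails and loops of $\A'$ have length $p$, the automaton is standard. The main obstacle I anticipate is the combination of \emph{uniformity} of $p$ with index bookkeeping: one must ensure the single constant $p$ normalizes the $(1,1)$-sequence and every branching sequence at once, and that the off-by-one in where each branch tail begins (at the $(1,1)$-state itself versus after the first off-diagonal symbol) is handled so that acceptance is preserved on all inputs, including the short and degenerate ones.
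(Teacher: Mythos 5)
Your proposal is correct and takes essentially the same route as the paper: the paper likewise fixes a single loop constant $p$ as a least common multiple dominating all tail lengths and divisible by all loop lengths (it uses the lcm of the actual tail and loop lengths of $\A$, bounded by $n^n$, where you use $\mathrm{lcm}(1,\dots,n)$), transforms $\A$ into an equivalent standard automaton with that loop constant, and bounds the resulting size by roughly $8p^2 \le 8n^{2n}$. Your write-up simply supplies the unrolling and congruence details that the paper compresses into the single sentence ``one can transform $\A$ into an equivalent standard automaton whose loop constant is $p$.''
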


\begin{proof} Let $p$ be the least common multiple of the lengths
of all loops and tails of $\A$.  An easy estimate shows that $p$
is no more than $n^n$.  One can transform $\A$ into an equivalent
standard automaton whose loop constant is $p$. Hence, there is a
standard  automaton equivalent to $\A$ whose size is bounded
above by $8n^{2n}$.
\end{proof}

\smallskip

We can simplify the general shape of the automaton using the fact
that we consider undirected graphs. Indeed, we need only consider
transitions labelled by $(\diamond , 1)$.  To see this, given an
automaton with only $(\diamond, 1)$ transitions, to include all
symmetric transitions, add a copy of each $(\diamond, 1)$
transition which is labelled with $(1, \diamond)$.

\smallskip

We recall a characterization theorem of unary automatic graphs
from \cite{rubin}.   Let $\B=(B, E_B)$ and $\D=(D, E_D)$ be finite
graphs.  Let $R_1, R_2$ be subsets of $D \times B$, and $R_3, R_4$
be subsets of $B\times B$.
%These parameters define an infinite graph in the following way.
%The construction below
%roughly does the following. The construction puts $\omega$ many
%copies of $\B$ into a line $\B^{(0)}, \B^{(1)}, \B^{(2)}, \ldots$
%and one copy of $\D$ precedes $\B^{(0)}$.  The edges between these
%$\D$, $\B^{(i)}$, and $\B^{(i+1)}$ are put in a certain regular
%manner based on the parameters  $\bar{R}=(R_1,R_2, R_3,R_4)$. Here
%is a formal description of the construction.
Consider the graph $\D$ followed by $\omega$ many copies of $\B$,
ordered as $\B^{0}, \B^{1}, \B^{2}, \ldots$.  Formally, the vertex
set of $\B^{i}$ is $B\times\{i\}$ and we write $b^{i}=(b,i)$ for
$b\in B$ and $i \in \omega$. The edge set $E^{i}$ of $\B^{i}$
consists of all pairs $(a^{i}, b^{i})$ such that $(a,b) \in E_B$.
We define the infinite graph, {\bf $unwind(\B,\D, \bar{R})$}, as
follows: \ $1)$\  The vertex set is $D\cup B^{0} \cup B^{1} \cup
B^{2} \cup \ldots$; \ $2)$ \ The edge set contains $E_D \cup
E^{0}\cup E^{1}\cup \ldots$ as well as the following edges, for
all $a,b\in B$, $d\in D$, and $i,j \in \omega$:
\begin{itemize}
\item $(d, b^{0})$ when $(d,b) \in R_1$, and $(d, b^{i+1})$ when
$(d,b) \in R_2$, \item $(a^{i}, b^{i+1})$ when $(a,b) \in R_3$,
and $(a^{i}, b^{i+2+j})$ when $(a,b) \in R_4$.
\end{itemize}

\begin{theorem} \label{thm:characterization} \cite{rubin}
A graph is unary automatic if and only if it is
isomorphic to  $unwind(\B,\D, \bar{R})$ for some parameters $\B$,
$\D$, and $\bar{R}$. Moreover, if $\A$ is a standard automaton
representing $\G$ then the parameters $\B, \D, \bar{R}$ can be
extracted in $O(n^2)$; otherwise, the parameters can be extracted
in $O(n^{2n})$, where $n$ is the number of states in $\A$.
\end{theorem}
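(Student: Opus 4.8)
The plan is to prove the two directions separately and then account for the running times, throughout exploiting the \emph{eventual periodicity} of a standard automaton guaranteed by Lemma \ref{lemma:standard}. So I would first assume $\G = (1^\star, E)$ is unary automatic and fix a standard automaton $\A$ for $E$ with loop constant $p$; since the graph is undirected I keep only the $(\diamond,1)$-transitions and consider a pair $(1^m,1^n)$ with $m \le n$. Reading $m$ symbols $(1,1)$ lands $\A$ in a $(1,1)$-state $s(m)$ lying in the $(1,1)$-tail when $m<p$ and otherwise satisfying $s(m)=\ell_{(m-p)\bmod p}$ in the $(1,1)$-loop; hence $s(m)$ is eventually periodic in $m$ with period $p$. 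From $s(m)$, reading the remaining $k=n-m$ symbols $(\diamond,1)$ reaches a state $v^{s(m)}_{k}$ whose membership in $F$ is, for each fixed first-coordinate state, eventually periodic in $k$ with period $p$. Thus whether $(1^m,1^n)\in E$ is governed by the pair $\big(s(m),\,k\bmod p\big)$ once $m$ and $k$ are large, which is exactly the rigidity the $unwind$ operation encodes.

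Concretely I would set $\D=\{1^0,\dots,1^{p-1}\}$ and, for positions $a\in\{0,\dots,p-1\}$, place the $a$-th vertex of the $i$-th copy at $1^{(i+1)p+a}$, so that $\D\cup B^0\cup B^1\cup\cdots = 1^\star$ with $|B|=p$. Choosing $|\D|=p$ forces every block vertex into the $(1,1)$-loop, so that $s\big(1^{(i+1)p+a}\big)=\ell_{a}$ \emph{independently of the copy index $i$}. The finite parameters are then read off $\A$: $E_D$ and $E_B$ from the edges inside $\D$ and inside a single block; $R_3$ from edges between adjacent blocks (here $k=p+(b-a)$, again independent of $i$); and $R_4$ from edges between blocks at distance $\ge 2$, where $k=(2+j)p+(b-a)>p$ sits in the $(\diamond,1)$-loop and, because passing to the next copy changes $k$ by the period $p$, the acceptance is \emph{constant} in both $i$ and $j$ --- which is precisely why the single relation $R_4$ suffices. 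Edges out of $\D$ split as the construction demands: $R_1$ records edges into $B^0$ (where $k$ may still lie in the $(\diamond,1)$-tail) and $R_2$ records edges into $B^{1},B^2,\dots$ (where $k>p$ is periodic and hence uniform across all later copies). Verifying $\G\cong unwind(\B,\D,\bar R)$ for these parameters then reduces to matching the two eventually-periodic descriptions coordinate by coordinate.

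For the converse I would code $unwind(\B,\D,\bar R)$ over $1^\star$ by the same block layout ($d_j\mapsto 1^{j}$ and the $\ell$-th vertex of $B^i\mapsto 1^{|D|+i|B|+\ell}$) and check that $E$ is regular. Each family of edges becomes an eventually periodic set of convolutions: $E_D$ and $R_1$ are finite; $E_B$ and $R_3$ are fixed-difference (hence periodic) families; and $R_2,R_4$ each contribute, for every pair of positions, an arithmetic progression of differences of common period $|B|$. Since regular relations are closed under finite union, $E$ is regular; in fact the automaton built this way has exactly the tail-plus-loop shape of Figure \ref{fg:standard}, confirming that $\G$ is unary automatic.

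Finally, for the complexity I would tabulate the behavior of $\A$ over one period: compute $s(m)$ and the acceptance vector $\big(v^{s}_{k}\big)_{k\le O(p)}$ for the $O(p)$ relevant first coordinates, then read every entry of $E_D,E_B,R_1,R_2,R_3,R_4$ off this table. For a standard automaton $p=O(\sqrt{n})$, so the extraction stays within the claimed $O(n^2)$; for a general automaton I would first invoke Lemma \ref{lemma:standard}, where $p$ is the least common multiple of the cycle lengths and is bounded by $n^n$, giving the $O(n^{2n})$ bound. I expect the genuine work to lie in the forward direction: one must show that the two-variable eventual periodicity really does collapse all long-range edges into the single uniform relation $R_4$ (and the short-range exceptions into $R_1,R_3$), i.e.\ that taking the block size equal to the loop constant $p$ upgrades ``separation $\ge 2$'' acceptance from merely periodic to constant. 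Pinning down the tail/loop boundaries so that nothing is double-counted between $\D$ and $B^0$ is the delicate part of the bookkeeping.
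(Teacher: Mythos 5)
The paper itself gives no proof of this theorem: it is quoted verbatim from \cite{rubin} (Rubin's thesis), so there is no in-paper argument to compare against line by line. Your reconstruction is correct and is the standard argument; it is also essentially the same technique the paper does spell out for its finite-degree analogue (Theorem \ref{thm:Gsigma}), namely reading the parameters off the tail/loop decomposition of the automaton, except that here you additionally need the $(\diamond,1)$-loops, which a one-loop automaton lacks, to account for the uniform relations $R_2$ and $R_4$. You correctly identify the crux: taking the block size equal to the loop constant $p$ forces every edge at block-separation $\geq 2$ to be read inside a $(\diamond,1)$-loop, where acceptance depends only on the residue of the length difference mod $p$, so such edges are constant in both the source copy and the separation and collapse into the single relations $R_2$ and $R_4$; the short-range exceptions are exactly what $R_1$, $R_3$, $E_D$, $E_B$ absorb. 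Your complexity accounting also matches the claim: an $O(p^2)$ acceptance table suffices, with $p = O(\sqrt{n})$ for a standard automaton (well within $O(n^2)$) and $p \leq n^n$ after invoking Lemma \ref{lemma:standard} otherwise (giving $O(n^{2n})$).
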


\section{Unary automatic graphs of finite degree}

A graph is of  {\bf finite degree} if there are at most finitely
many edges from each vertex $v$. We call an automaton $\A$
recognizing a binary relation over $\{1\}$ a {\bf one-loop
automaton} if its transition diagram contains exactly one loop,
the $(1,1)$-loop. The general structure of one-loop automata is
given in Figure \ref{fg:Type1Auto}.

\begin{figure}[h]
\begin{center}
%\resizebox{!}{3.5cm}{\includegraphics{new2.eps}}
\resizebox{!}{3cm}{\includegraphics{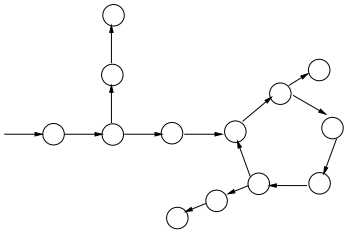}}
\caption{\label{fg:Type1Auto}One-loop automaton}
\end{center}
\end{figure}

\smallskip

We will always assume that the lengths of all the tails of the
one-loop automata are not bigger than the size of the
$(1,1)$-loop. The following is an easy proposition and we omit its
proof.

\begin{proposition}
Let $\G=(V,E)$ be a unary automatic graph, then $\G$ is of finite
degree if and only if there is a one-loop automaton $\A$
recognizing $E$.  \qed
\end{proposition}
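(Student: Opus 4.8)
The plan is to read the degree of a vertex directly off the automaton $\A$ recognizing $E$, exploiting the shape described in Figure~\ref{fg:standard}. For a vertex $1^i$, its neighbors $1^j$ with $j>i$ are exactly those for which $\A$ accepts the convolution $(1,1)^i(\diamond,1)^{j-i}$. Writing $s_i$ for the (unique, by determinism) $(1,1)$-state reached from $\iota$ on input $(1,1)^i$, acceptance of $(1,1)^i(\diamond,1)^{j-i}$ is governed entirely by the $(\diamond,1)$-tail and $(\diamond,1)$-loop emanating from $s_i$. Since there are only finitely many neighbors $1^j$ with $j\le i$ and at most one self-loop, the first step is to establish the key equivalence: the vertex $1^i$ has infinite degree if and only if the $(\diamond,1)$-loop attached to $s_i$ contains a final state. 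Indeed, if no final state lies on that loop then only the finitely many values $j-i$ falling in the $(\diamond,1)$-tail can be accepted, whereas if a final state sits on a loop of length $\ell$ then $(1,1)^i(\diamond,1)^m$ is accepted for all $m$ in a fixed residue class mod $\ell$ beyond the tail, producing infinitely many neighbors. As $i$ ranges over $\N$ the states $s_i$ exhaust all $(1,1)$-states (the $(1,1)$-tail states once, then the $(1,1)$-loop states periodically), so $\G$ has finite degree if and only if no $(\diamond,1)$-loop of $\A$ --- equivalently, by the undirected symmetry, no $(1,\diamond)$-loop --- carries a final state.

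The reverse implication is then immediate: a one-loop automaton has, by definition, no $(\diamond,1)$- or $(1,\diamond)$-loop at all, so every $s_i$ reaches only an acyclic $(\diamond,1)$-tail; only finitely many $m$ can be accepted from each $s_i$, and together with the finitely many smaller neighbors this bounds the degree of every vertex. Note the $(1,1)$-loop is harmless here: a final state on it only creates self-loops at infinitely many distinct vertices $1^i$, never infinitely many edges at a single vertex.

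For the forward implication I would start from the standard automaton equivalent to $\A$ guaranteed by Lemma~\ref{lemma:standard}, in which every tail and loop has the common length $p$. By the equivalence above, finite degree forces every $(\diamond,1)$-loop and $(1,\diamond)$-loop to be free of final states. I would then prune $\A$ into a one-loop automaton $\A'$ by deleting, for each $(1,1)$-state $s$, the transition that carries the $(\diamond,1)$-tail from $s$ into its $(\diamond,1)$-loop (and symmetrically for $(1,\diamond)$), leaving those transitions undefined so that no sink-loop is introduced. What remains is the $(1,1)$-tail, the single $(1,1)$-loop, and the acyclic $(\diamond,1)$- and $(1,\diamond)$-tails of length $p$; since these tails have length $p$, equal to the $(1,1)$-loop, the tail-length convention for one-loop automata is met.

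The main obstacle is to verify that pruning does not change the accepted language, i.e. that $L(\A')=L(\A)=\otimes E$. The content here is that any convolution $(1,1)^i(\diamond,1)^m$ whose computation in $\A$ enters a $(\diamond,1)$-loop was already rejected, precisely because finite degree guarantees no final state lies on that loop; such a word is now stuck in $\A'$ and hence still rejected, while every accepted word stays entirely within the retained tails and loop and is unaffected. Making this argument airtight --- tracking that acceptance is decided by the state reached after exactly $\max(i,j)$ symbols, and that the pruned transitions are never used by an accepting run --- is the only delicate point; everything else is bookkeeping about the tail/loop decomposition already set up before the statement.
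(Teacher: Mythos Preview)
The paper omits the proof of this proposition entirely, marking it with \qed\ and the remark ``The following is an easy proposition and we omit its proof.'' Your argument is correct and is precisely the natural one the authors presumably had in mind: read off from the $(\diamond,1)$- and $(1,\diamond)$-loops whether any final state occurs there, observe that finite degree is equivalent to the absence of such final states, and then prune those loops. The detour through Lemma~\ref{lemma:standard} is not strictly necessary for the bare proposition---you could prune the non-$(1,1)$ loops of the original automaton directly and still obtain a one-loop automaton---but it does buy you the tail-length convention stated immediately after the proposition, so it is a sensible choice.
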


By Lemma \ref{lemma:standard}, transforming a given
automaton to an equivalent standard automaton  may blow up the
number of states exponentially. However, there is only polynomial
blow up if $\A$ is a one-loop automaton.

\begin{lemma}
If $\A$ is a one-loop automaton with $n$ states, there exists an
equivalent standard one-loop automaton with loop constant $p\leq n$.
\end{lemma}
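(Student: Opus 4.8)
The plan is to show that when $\A$ is a one-loop automaton, the only obstruction to being standard is the incommensurability of the single $(1,1)$-loop length with the various tail lengths, and that this can be repaired with at most a factor-$n$ blow-up. First I would let $p$ denote the length of the unique $(1,1)$-loop of $\A$. By the standing assumption stated just before the proposition, every tail of the automaton has length at most $p$, so the relevant lengths are already all bounded by $p \leq n$. The goal is to produce an equivalent automaton in which every loop and every tail has length exactly $p$, i.e., with loop constant $p$.

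The key steps proceed tail by tail. For each $(1,\diamond)$-tail or $(\diamond,1)$-tail whose length $\ell$ is strictly less than $p$, I would insert $p - \ell$ extra states by subdividing transitions (or padding the end of the tail), assigning each new state the same acceptance behavior and the same outgoing loop-entry transition as the original terminal state, so that the language recognized is unchanged. The same padding is applied to the $(1,\diamond)$-loops and $(\diamond,1)$-loops: since in a one-loop automaton these side-loops arise from periodic behavior on the $\diamond$-padded inputs, a loop of length $\ell$ dividing $p$ can be unrolled to length exactly $p$ by traversing it $p/\ell$ times, again without changing the accepted language. Because $p$ is the loop constant we are aiming for and every individual length already divides or is bounded by $p$, each unrolling or padding is a routine local surgery. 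After performing these modifications uniformly across all $2p$ of the $(1,1)$-states and their attached side-tails and side-loops, the resulting automaton is standard with loop constant exactly $p$.

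Finally I would bound the size. Since $p \leq n$ (the loop length cannot exceed the number of states) and each of the $2p$ $(1,1)$-states carries side-tails and side-loops now padded to length $p$, a direct count shows the total number of states is $O(p^2) = O(n^2)$, which is polynomial rather than exponential — this is exactly the contrast with Lemma~\ref{lemma:standard}, where the least common multiple of many independent loop lengths could be as large as $n^n$. Here there is only one loop to reconcile, so no least-common-multiple blow-up occurs. I expect the main obstacle to be the bookkeeping in the loop-unrolling step: one must verify carefully that replacing a side-loop of length $\ell \mid p$ by its $p/\ell$-fold unrolling preserves the accepted convolutions for all input lengths, rather than just asserting it, since the acceptance of a padded word depends on which residue class modulo the loop length the $\diamond$-portion of the input falls into. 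Making this residue-preservation precise, while still keeping $p \leq n$, is the delicate point; everything else is routine automaton construction.
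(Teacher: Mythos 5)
There is a genuine gap, and it stems from a structural misunderstanding. By definition, a one-loop automaton contains \emph{exactly one} loop, namely the $(1,1)$-loop; it has no $(1,\diamond)$-loops or $(\diamond,1)$-loops at all, and the target standard one-loop automaton has none either (it consists of $2p$ $(1,1)$-states, each carrying a $(1,\diamond)$-tail and a $(\diamond,1)$-tail of length $p$, roughly $4p^2$ states in total). So the step you single out as the delicate heart of the argument --- unrolling side-loops of length $\ell$ dividing $p$ and verifying residue-class preservation --- concerns structure that does not exist; it is vacuous. Meanwhile, the genuinely nontrivial step is missing: nothing in your construction makes the $(1,1)$-tail have length exactly $p$, and you tacitly assume the automaton already has $2p$ many $(1,1)$-states. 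If the $(1,1)$-tail has length $t_0 < p$, you cannot repair this by inserting dummy states (that shifts the position of the loop and changes the recognized relation); the correct surgery is to unroll the $(1,1)$-loop \emph{into} the tail, i.e., append copies of the first $p - t_0$ loop states together with their attached side-tails, and re-enter the loop at the correspondingly rotated state. This rotation/unrolling is exactly where the periodicity of the loop is used, and it is why the paper's proof takes $p$ to be the least \emph{multiple} of the loop length $l$ with $p \geq t$ ($t$ the longest tail length), giving $p \leq l + t \leq n$; your choice $p = l$ is admissible under the paper's standing assumption that tails are no longer than the loop, and then agrees with the paper's $p$, so that part is harmless --- but the tail-extension construction it is supposed to enable never appears.

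Your treatment of the side-tails is also incorrect as stated. To lengthen a $(1,\diamond)$- or $(\diamond,1)$-tail from $\ell$ to $p$ one appends $p - \ell$ \emph{non-final} dead-end states; that is the entire operation, and it is what leaves the language unchanged. What you describe --- subdividing transitions, or giving the new states ``the same acceptance behavior as the original terminal state'' --- alters the language: subdividing moves existing final states to different distances from the $(1,1)$-states, and duplicating a final terminal state creates final states at new distances, i.e., new accepted pairs $(1^a,1^{a+d})$. (There is also no ``outgoing loop-entry transition'' on a side-tail of a one-loop automaton to copy, since side-tails are dead ends here.) In summary, the proof needs three components: unroll the $(1,1)$-loop to length $p$ (a multiple of $l$), rotate-unroll it into the $(1,1)$-tail to bring that tail to length $p$, and pad every side-tail to length $p$ with non-final states. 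Your write-up performs the third incorrectly, omits the second entirely, and spends its effort on a step that is empty for one-loop automata.
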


\begin{proof} Let $l$ be the length of the loop in $\A$ and $t$ be
the length of the longest tail in $\A$. Let $p$ be the least
multiple of $l$ such that $p\geq t$. It is easy to see that $p
\leq l+t \leq n$. One can transform $\A$ into an equivalent
standard one-loop automaton whose loop constant is $p$.
\end{proof}

Note that the equivalent standard automaton has $2p$ (1,1)-states. From each of them there
is a $(1,\diamond)$-tail of length $p$ and a $(\diamond, 1)$-tail
of length $p$. Hence the automaton has $4p^2$ states. By the above lemma, we always assume the input automaton $\A$ is standard. In the rest of the paper, we will state all results in terms of the loop constant $p$ instead of $n$, the number of states of the input automaton. Since $p\leq n$, for any constant $c>0$, an $O(p^c)$ algorithm can also be viewed as an $O(n^c)$ algorithm.

\smallskip

Given two unary automatic graphs of finite degree $\G_1=(V, E_1)$
and $\G_2=(V, E_2)$ (where we recall the convention that the
domain of each graph is $1^\star$), we can form the {\bf union
graph} $\G_1\oplus \G_2=(V, E_1 \cup E_2)$ and the {\bf
intersection graph} $\G_1\otimes \G_2=(V, E_1 \cap E_2)$.
Automatic graphs of finite degree are closed under these
operations. Indeed, let $\A_1$ and $\A_2$ be one-loop automata
recognizing $E_1$ and $E_2$ with loop constants $p_1$ and $p_2$,
respectively. The standard construction that builds automata for
the union and intersection operations produces a one-loop
automaton whose loop constant is
 $p_1\cdot p_2$. We introduce another operation: consider
the new graph $\G_1'=(V, E_1')$, where the set $E_1'$ of edges is
defined as follows; a  pair $(1^n,1^m)$ is in $E'$ if and only if
$(1^n,1^m) \notin E$ and $|n-m|\leq p_1$. The relation $E_1'$ is
recognized by the same automaton as $E_1$, modified so that  all
$(\diamond,1)$-states that are final declared non-final, and all
the $(\diamond,1)$-states that are non-final declared final. Thus,
we have the following proposition:

\begin{proposition}
If $\G_1$ and $\G_2$ are  automatic graphs of finite degree then
so are $\G_1\oplus \G_2$, $\G_1\otimes \G_2$, and $\G_1'$. \qed
\end{proposition}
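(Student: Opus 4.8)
The plan is to reduce everything to the characterization that a unary automatic graph has finite degree if and only if its edge relation is recognized by a one-loop automaton. Regularity of $E_1\cup E_2$, $E_1\cap E_2$, and $E_1'$ is immediate, since regular relations are closed under the Boolean operations; so the only real content is to exhibit, in each case, a \emph{one-loop} automaton recognizing the resulting relation. Throughout I would assume, as per the standing convention, that $\A_1$ and $\A_2$ are standard one-loop automata with loop constants $p_1$ and $p_2$ and tails of length at most $p_1$ and $p_2$ respectively.

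For $\G_1\oplus\G_2$ and $\G_1\otimes\G_2$, the approach is to take the synchronous product $\A_1\times\A_2$, reading both tapes in lockstep, and to choose the final states according to the union (respectively intersection) acceptance condition. The key step is to check that this product is again a one-loop automaton. On inputs of type $(1,1)$ each factor runs along its own tail-followed-by-loop, so the product runs along a tail followed by a single loop whose length is $\mathrm{lcm}(p_1,p_2)\le p_1 p_2$; no second loop can appear because two synchronized eventually-periodic sequences are themselves eventually periodic with a single period. On inputs of type $(1,\diamond)$ and $(\diamond,1)$ each factor runs along a loop-free tail, so the product again has no loop there. Hence the product has exactly one loop, the $(1,1)$-loop, and relabelling final states does not alter the transition diagram; this gives a one-loop automaton with loop constant $p_1 p_2$, and so both $\G_1\oplus\G_2$ and $\G_1\otimes\G_2$ have finite degree.

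For $\G_1'$ I would first observe finite degree directly: by definition every neighbour $1^m$ of $1^n$ in $E_1'$ satisfies $|n-m|\le p_1$, so $1^n$ has at most $2p_1+1$ neighbours. To produce the automaton, start from $\A_1$ and flip the acceptance status of every $(\diamond,1)$-tail state (and, by the symmetrization convention for undirected graphs, of every $(1,\diamond)$-tail state). The step to verify carefully is that this flip realizes exactly \lQuote complement intersected with the band $|n-m|\le p_1$\rQuote: when $|n-m|\le p_1$ the computation on $\otimes(1^n,1^m)$ ends on one of these tail states, so swapping its label turns acceptance into rejection and vice versa; when $|n-m|>p_1$ the computation runs off the loop-free tail and is rejected both before and after the flip, which is precisely why the band restriction appears automatically and why finite degree is preserved. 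Since only acceptance labels change, the single $(1,1)$-loop is untouched and the modified automaton is still one-loop.

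The main obstacle I expect is the $\G_1'$ case, namely pinning down that the acceptance-flip on the finite, loop-free tails coincides with the bounded complement $E_1'$ and keeps the relation symmetric; the union and intersection cases are routine once one confirms that the product of two single loops contributes only one loop to the $(1,1)$-part.
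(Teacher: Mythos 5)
Your proposal is correct and follows essentially the same route as the paper: the standard synchronous product of the two one-loop automata (which remains one-loop, with loop constant $\mathrm{lcm}(p_1,p_2)\le p_1p_2$) handles $\G_1\oplus\G_2$ and $\G_1\otimes\G_2$, and flipping the finality of the $(\diamond,1)$-states of $\A_1$ realizes $E_1'$, with the band $|n-m|\le p_1$ enforced automatically by the loop-free $(\diamond,1)$-tails. You merely spell out details (single-loop structure of the product, the band argument) that the paper asserts without proof.
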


Now our goal is to recast Theorem \ref{thm:characterization} for
graphs of finite degree. Our analysis will show that, in contrast
to the general case for automatic graphs,  the parameters $\B$,
$\D$, and $\bar{R}$ for graphs of finite degree can be extracted
in linear time.

\smallskip
\begin{definition}[Unfolding Operation] \label{dfn: unfolding}
Let $\D=(V_{\D}, E_{\D})$ and $\F=(V_{\F}, E_{\F})$ be finite
graphs. Consider the finite sets $\Sigma_{\D,\F}$ consisting of
all mappings $\eta:V_{\D}\rightarrow P(V_{\F})$, and $\Sigma_{\F}$
consisting of all mappings $\sigma: V_{\F}\rightarrow P(V_{\F})$.
Any infinite sequence $\alpha=\eta\sigma_0\sigma_1\ldots$ where
$\eta \in \Sigma_{\D,\F}$ and  $\sigma_i \in \Sigma_{\F}$ for each
$i$, defines the infinite graph $\G_{\alpha}=(V_{\alpha},
E_{\alpha})$ as follows:

\begin{itemize}

\item $V_{\alpha}=V_{\D}\cup \{(v,i) \mid v\in V_{\F},  i \in
\omega\}$.

\item $E_{\alpha}= E_{\D} \cup \{(d, (v,0)) \mid v\in \eta(d)\}
\cup \{((v,i),(v',i)) \mid (v,v') \in E_{\F}, i\in \omega\} \cup
\{((v,i),(v',i+1)) \mid v'\in \sigma_i (v), i \in \omega\}$.

\end{itemize}
\end{definition}

Thus $\G_{\alpha}$ is obtained by taking $\D$ together with an
infinite disjoint union of $\F$ such that edges between $\D$ and
the first copy of $\F$ are put according to the mapping $\eta$,
and edges between successive copies of $\F$ are put according to
$\sigma_i$.

\smallskip

Figure \ref{fg:Gsigma} illustrates the general shape of a unary
automatic graph of finite degree that is build from $\D$, $\F$,
$\eta$, and $\sigma^\omega$, where $\sigma^{\omega}$ is the
infinite word $\sigma\sigma\sigma\cdots$.

\begin{figure}[h]
\begin{center}
\resizebox{!}{2.5cm}{\includegraphics{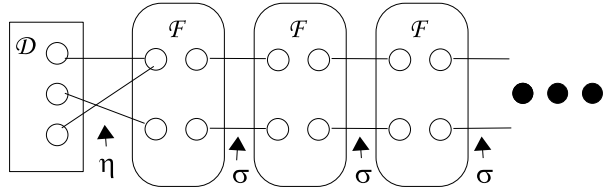}}
\caption{\label{fg:Gsigma}  Unary automatic graph of finite degree
$\G_{\eta\sigma^{\omega}}$}
\end{center}
\end{figure}

\begin{theorem}\label{thm:Gsigma}
A graph of finite degree $\G=(V,E)$ possesses  a unary automatic
presentation if and only if there exist finite graphs $\D, \F$ and
mappings $\eta:V_{\D} \rightarrow P(V_{\F})$ and $\sigma: V_{\F}
\rightarrow P(V_{\F})$ such that $\G$ is isomorphic to
$\G_{\eta\sigma^\omega}$.
\end{theorem}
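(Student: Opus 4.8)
The plan is to obtain this theorem as a specialization of the characterization in Theorem~\ref{thm:characterization}, using the finite-degree hypothesis to pin down the shape of the parameters. Both implications rest on a single bookkeeping observation: $\G_{\eta\sigma^\omega}$ is exactly the graph $unwind(\F,\D,\bar R)$ in which the long-range parameters are empty. Concretely, putting $\B=\F$, $R_1=\{(d,v)\st v\in\eta(d)\}$, $R_3=\{(v,v')\st v'\in\sigma(v)\}$, and $\bar R=(R_1,\emptyset,R_3,\emptyset)$, I would compare the two definitions edge for edge under the identification $b^i\leftrightarrow(v,i)$. The intra-copy edges match because $E^i$ is a copy of $E_\B=E_\F$; the $\D$-to-first-copy edges match $R_1$ against $\eta$; and the adjacent-copy edges $(a^i,b^{i+1})$ governed by the single relation $R_3$ are precisely what produces the constant mapping $\sigma_i=\sigma$. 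Hence $unwind(\F,\D,(R_1,\emptyset,R_3,\emptyset))$ and $\G_{\eta\sigma^\omega}$ have identical vertex and edge sets.

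For the backward direction I would fix these parameters and invoke Theorem~\ref{thm:characterization}, which immediately yields that $\G_{\eta\sigma^\omega}=unwind(\F,\D,\bar R)$ is unary automatic. Finite degree is then checked directly from Definition~\ref{dfn: unfolding}: a vertex $d\in V_\D$ has neighbours only inside the finite set $V_\D$ together with the at most $|V_\F|$ vertices $(v,0)$ for $v\in\eta(d)$; a vertex $(v,i)$ has neighbours only within its own copy (via $E_\F$), in the two adjacent copies (via $\sigma$), and, when $i=0$, possibly in $\D$. Since $\F$ is finite, every vertex has finite degree.

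The forward direction is where the finite-degree assumption does the real work, and this is the step I expect to be the crux. Given a unary automatic $\G$ of finite degree, Theorem~\ref{thm:characterization} supplies an isomorphism $\G\cong unwind(\B,\D,\bar R)$ for some finite $\B,\D$ and some $\bar R=(R_1,R_2,R_3,R_4)$. Because degree is an isomorphism invariant, $unwind(\B,\D,\bar R)$ is itself of finite degree, and I claim this forces $R_2=R_4=\emptyset$. Indeed, if $(d,b)\in R_2$ then $(d,b^{i+1})$ is an edge for every $i\in\omega$, so $d$ would have the infinitely many distinct neighbours $\{b^{i+1}\st i\in\omega\}$; likewise, if $(a,b)\in R_4$ then $a^0$ would be joined to all of $\{b^{2+j}\st j\in\omega\}$. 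Either case contradicts finite degree. With $R_2=R_4=\emptyset$, the identification of the previous paragraph shows that $unwind(\B,\D,\bar R)$ is exactly $\G_{\eta\sigma^\omega}$ for $\eta(d)=\{b\st (d,b)\in R_1\}$ and $\sigma(v)=\{v'\st (v,v')\in R_3\}$, so $\G\cong\G_{\eta\sigma^\omega}$, completing the argument.

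The main obstacle is therefore not regularity (which comes for free from Theorem~\ref{thm:characterization}) but the structural fact that a finite-degree unary automatic graph cannot secretly require nonempty $R_2$ or $R_4$; the degree argument above is exactly what isolates the purely periodic, nearest-neighbour form $\eta\sigma^\omega$ from the general $unwind$ scheme. I would also remark, in line with the surrounding discussion, that this route is effective: starting instead from a standard one-loop automaton of loop constant $p$, one reads off $\B,\D,R_1,R_3$ by inspecting the $(1,1)$-loop and the $(\diamond,1)$-tail, and the one-loop form guarantees that edges join only vertices whose lengths differ by at most $p$, so only adjacent copies of $\F$ are ever connected. This gives both the linear-time extraction of $\D,\F,\eta,\sigma$ promised above and an automaton-level explanation of why $R_4$ must vanish.
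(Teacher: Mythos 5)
Your proposal is correct, but it takes a genuinely different route from the paper's. The paper proves Theorem \ref{thm:Gsigma} directly at the automaton level: for the forward direction it converts the recognizing automaton into a one-loop automaton with loop constant $p$, builds $\D$ from the states on the $(1,1)$-tail and $\F$ from the states on the $(1,1)$-loop, and reads $\eta$ and $\sigma$ off the final states on the $(\diamond,1)$-tails (the distance from a $(1,1)$-state to a final state encodes which copy the edge lands in); for the converse it reverses this construction to produce a one-loop automaton recognizing $\G_{\eta\sigma^\omega}$. You instead derive the theorem as a specialization of Theorem \ref{thm:characterization}: you identify $\G_{\eta\sigma^\omega}$ with $unwind(\F,\D,(R_1,\emptyset,R_3,\emptyset))$, and in the forward direction you note that finite degree, being an isomorphism invariant, forces $R_2=R_4=\emptyset$, since $(d,b)\in R_2$ would give $d$ the infinitely many neighbours $b^{i+1}$ and $(a,b)\in R_4$ would give $a^0$ the infinitely many neighbours $b^{2+j}$. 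That degree argument is sound and is exactly the missing structural step, so your reduction goes through. Your approach is shorter and more conceptual: regularity comes for free from the cited characterization, and the argument isolates precisely which part of the general $unwind$ scheme is incompatible with finite degree. What the paper's construction buys in exchange is the quantitative content used later in the paper: the explicit one-loop automaton shows $|V_\D|=|V_\F|=p$ and yields the $O(p^2)$ extraction of $\D$, $\F$, $\eta$, $\sigma$ stated in the corollary following the theorem, on which the subsequent algorithms depend; on your route this effectiveness must be recovered either from the extraction bound in Theorem \ref{thm:characterization} (after standardizing the one-loop automaton via the earlier lemma) or from the automaton-level remark you append at the end, which is in essence the paper's own proof.
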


\begin{proof}
Let $\G=(V,E)$ be a unary automatic graph of finite degree. Let
$\A$ be an automaton recognizing $E$. In linear time on the number
of states of $\A$ we can easily transform $\A$ into a one-loop
automaton. So, we assume that $\A$ is a one-loop automaton with
loop constant $p$.  We construct the finite graph $\D$ by setting
$V_{\D} = \{q_0,q_1,\ldots, q_{p-1}\}$, where $q_0$ is the
starting state, $q_0,\ldots, q_{p-1}$ are all states on the
$(1,1)$-tail such that $q_{i}$ is reached from $q_{i-1}$ by
reading $(1,1)$ for $i>0$; and for $0 \leq i \leq j < p$, $(q_i,
q_j) \in E_{\D}$ iff there is a final state $q_{f}$ on the
$(\diamond,1)$-tail out of $q_{i}$, and the distance from $q_{i}$
to $q_{f}$ is $j-i$. We construct the graph $\F$ similarly by
setting $V_{\F} = \{q'_0,\ldots,q'_{p-1}\}$ where
$q'_0,\ldots,q'_{p-1}$ are all states on the $(1,1)$-loop. The
edge relation $E_{\F}$ is defined in a similar way as $E_{\D}$.
The mapping $\eta: V_{\D} \to P(V_{\F})$ is defined for any $m, n
\in \{0,\ldots,p-1\}$ by putting $q'_n$ in $\eta(q_m)$ if and only
if there exists a final state $q_f$ on the $(\diamond, 1)$-tail
out of $q_m$, and the distance from $q_m$ to $q_f$ equals $p+n-m$.
The mapping $\sigma$ is constructed in a similar manner by reading
the $(\diamond, 1)$-tails out of the $(1,1)$-loop. It is clear
from this construction that the graphs $\G$ and
$G_{\eta\sigma^\omega}$ are isomorphic.

\smallskip

Conversely, consider the graph $\G_{\eta\sigma^\omega}$ for some
$\eta \in \Sigma_{\D}$ and $\sigma \in \Sigma_{\F}$. Assume that
$V_{\D}=\{q_0,\ldots, q_{\ell-1}\}$, $V_{\F}=\{q'_0,\ldots,
q'_{p-1}\}$. A one-loop automaton $\A$ recognizing the edge
relation of $\G_{\eta \sigma^\omega}$ is constructed as follows.
The $(1,1)$-tail of the automaton is formed by
$\{q_{0},\ldots,q_{\ell-1}\}$ and the $(1,1)$-loop is formed by
$\{q'_0,\ldots, q'_{p-1}\}$, both in natural order. The initial
state is $q_0$.  If for some $i<j$, $\{q_{i}, q_{j}\} \in E_{\D}$
, then put a final state $q_{f}$ on the $(\diamond, 1)$-tail
starting from $q_{i}$ such that the distance from $q_{i}$ to
$q_{f}$ is $j-i$. If $q'_{j} \in \eta(q_{i})$, then repeat the
process but make the corresponding distance $p+j-i$. The set of
edges $E_{\F}$ and mapping $\sigma$ are treated in a similar
manner by putting final states on the $(\diamond, 1)$-tails from
the $(1,1)$-loop.

Again, we see that $\A$ represents a unary automatic graph that is
isomorphic to $\G_{\eta\sigma^\omega}$.
\end{proof}
The proof of the above theorem also gives us the following corollary.
\begin{corollary}
If $\G$ is a unary automatic graph of finite degree, the
parameters $\D$, $\F$, $\sigma$ and $\eta$ can be extracted in
$O(p^2)$ time, where $p$ is the loop constant of the one-loop
automaton representing the graph. Furthermore, $|V_\F| = |V_\D| =
p$.\qed\end{corollary}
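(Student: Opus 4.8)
The plan is to observe that the forward direction of the proof of Theorem \ref{thm:Gsigma} is already algorithmic: it reads the four parameters directly off the transition diagram of a standard one-loop automaton $\A$ with loop constant $p$. So I would simply bound the cost of each read-off step and verify the claimed cardinalities, rather than prove anything new.

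First I would recall that, by the preceding lemma standardizing one-loop automata and the stated convention, we may assume $\A$ is a standard one-loop automaton: its $(1,1)$-tail and $(1,1)$-loop each consist of exactly $p$ states, and from each of the $2p$ $(1,1)$-states hangs a $(\diamond,1)$-tail whose length is $O(p)$, for a total of $O(p^2)$ states. The equality $|V_\D| = |V_\F| = p$ is then immediate from the construction in that proof, since $V_\D$ is defined to be the $p$ states $q_0,\ldots,q_{p-1}$ of the $(1,1)$-tail and $V_\F$ the $p$ states $q'_0,\ldots,q'_{p-1}$ of the $(1,1)$-loop.

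Next I would account for the extraction cost. Listing $V_\D$ and $V_\F$ takes $O(p)$, by tracing the $(1,1)$-tail and then the $(1,1)$-loop. To build $E_\D$ and $\eta$, I would, for each of the $p$ states $q_m$ on the $(1,1)$-tail, walk the $(\diamond,1)$-tail out of $q_m$ and record, for each visited state, whether it is final together with its distance $d$ from $q_m$: a final state at distance $d = j-m$ with $j < p$ yields the edge $\{q_m,q_j\}\in E_\D$, while one at distance $d = p+n-m$ contributes $q'_n$ to $\eta(q_m)$. Since each such tail has length $O(p)$, this is $O(p)$ work per state and $O(p^2)$ overall. The set $E_\F$ and the map $\sigma$ are read off in precisely the same fashion from the $(\diamond,1)$-tails hanging off the $p$ loop states, again in $O(p^2)$. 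Summing these contributions gives the claimed $O(p^2)$ bound.

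The one point requiring care — and the step I would flag as the main obstacle — is justifying that each $(\diamond,1)$-tail is traversed in $O(p)$ steps, i.e.\ that the reach of the edge relation is bounded in terms of $p$. This is exactly where finite degree enters: because $\A$ is a \emph{one-loop} automaton there is no $(\diamond,1)$-loop, so every $(\diamond,1)$-tail terminates, and the standardness assumption bounds its length by $O(p)$ (note that the distances $p+n-m$ used for $\eta$ and $\sigma$ can be nearly $2p$, so the relevant bound is $O(p)$ rather than exactly $p$). Correctness of the resulting parameters need not be re-argued, as it is established in the proof of Theorem \ref{thm:Gsigma}; the content of the corollary is precisely the complexity accounting above together with the two cardinality identities.
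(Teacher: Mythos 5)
Your proposal is correct and follows the paper's own route: the paper derives this corollary directly from the forward construction in the proof of Theorem \ref{thm:Gsigma}, exactly as you do, with $V_\D$ and $V_\F$ read off the $(1,1)$-tail and $(1,1)$-loop (giving $|V_\D|=|V_\F|=p$) and $E_\D$, $E_\F$, $\eta$, $\sigma$ recovered by walking the $2p$ many $(\diamond,1)$-tails of length $O(p)$, for $O(p^2)$ total work. Your explicit cost accounting, and the observation that the relevant tail lengths are $O(p)$ rather than exactly $p$, simply makes precise what the paper leaves implicit.
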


\section{Deciding the infinite component problem}

Recall the graphs are undirected. A \textbf{component} of $\G$ is
the transitive closure of a vertex under the edge relation. The
{\bf infinite component problem} asks whether a given graph $\G$
has an infinite component.

\begin{theorem}\label{thm:infinite component}
The infinite component problem for unary automatic graph of finite
degree $\G$ is solved in $O(p^{3})$, where $p$ is the loop
constant of the automaton recognizing $\G$.
\end{theorem}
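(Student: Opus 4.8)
The plan is to reduce the problem to the periodic \lQuote body\rQuote\ of the graph and then to a single finite weighted graph on $V_{\F}$. By Theorem \ref{thm:Gsigma} and its corollary I may assume $\G$ is presented as $\G_{\eta\sigma^\omega}$ with parameters $\D,\F,\eta,\sigma$ extracted in $O(p^2)$. The vertex set is the finite set $V_{\D}$ together with the copies $\F^0,\F^1,\ldots$; since $V_{\D}$ is finite and the $\eta$-edges touch only layer $0$, any infinite component must contain infinitely many $\F$-vertices, so the finite prefix $\D$ cannot affect \emph{whether} an infinite component exists. First I would therefore discard $\D$ and $\eta$ and work with the body: vertices $(v,i)$ for $v\in V_{\F}$, $i\in\omega$, intra-layer edges from $E_{\F}$, and inter-layer edges $((v,i),(v',i+1))$ whenever $v'\in\sigma(v)$.

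The key device is to encode the layer index as a \emph{displacement}. I would build a finite directed multigraph $B$ on vertex set $V_{\F}$ in which each edge of $E_{\F}$ contributes a displacement-$0$ edge and each pair $v'\in\sigma(v)$ contributes an edge $v\to v'$ of displacement $+1$ (traversed backwards it counts as $-1$). An oriented walk in $B$ then carries a well-defined net displacement, namely its number of forward $\sigma$-steps minus backward $\sigma$-steps, and the body of $\G$ is exactly the \lQuote lift\rQuote\ of $B$: a walk in the body from $(v,i)$ to $(v',j)$ projects to an oriented walk in $B$ from $v$ to $v'$ of net displacement $j-i$, and conversely every oriented walk lifts. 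Since each layer is finite, a component of $\G$ is infinite if and only if it meets layers of arbitrarily large index.

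The heart of the argument is the following characterization, which I would prove next: $\G$ has an infinite component if and only if $B$ contains a closed oriented walk of nonzero net displacement. For the easy direction, a closed oriented walk at $v$ with net displacement $s\neq 0$ can be iterated, and starting the lifted walk at a sufficiently high layer (so it never drops below $0$) produces an infinite path whose layer tends to infinity, hence an infinite component. For the converse I would argue contrapositively: if every closed oriented walk has net displacement $0$, then on each connected component of $B$ the displacement is path-independent, so one can fix a height $h:V_{\F}\to\mathbb{Z}$ with $h(v')-h(v)$ equal to the displacement of every edge $v\to v'$; the layer reached by any lifted walk from a fixed start is then pinned to $h$ up to an additive constant, the visited layers are bounded, and every component is finite. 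I expect this converse, and specifically the realization that back-and-forth \lQuote zig-zag\rQuote\ oriented walks can create positive net drift with no forward-only directed cycle present, to be the main conceptual obstacle; it is exactly what forces oriented walks rather than ordinary directed reachability.

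Finally I would convert the characterization into an algorithm. Detecting a closed oriented walk of nonzero displacement is precisely detecting that the height constraints $h(v')-h(v)=d$ are infeasible on some component, which a single BFS/DFS per component decides by propagating tentative heights and flagging any edge that conflicts; this already runs in $O(p^2)$. To match the stated bound, one may instead take an all-pairs view: using a Floyd--Warshall/Bellman--Ford-style closure over the $O(p)$ vertices of $B$ with edge weights in $\{-1,0,+1\}$, a nonzero-displacement cycle appears as a vertex reachable from itself with nonzero net displacement (equivalently a negative closed walk, since every edge occurs with both signs). This closure runs in $O(p^3)$, dominating the $O(p^2)$ extraction and construction of $B$, and yields the claimed $O(p^3)$ bound.
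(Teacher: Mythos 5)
Your proposal is correct and its skeleton matches the paper's: discard $\D$, pass to a finite displacement digraph on $V_\F$ (the paper's $\F^\sigma$, which additionally contracts the connected components of $\F$; your $0$-displacement edges for $E_\F$ play the same role), characterize the existence of an infinite component by the existence of an oriented cycle of nonzero net length (the paper's Lemma \ref{lm:cycle}), and detect it by breadth-first propagation of displacement labels with conflict detection. You diverge genuinely in two places, both to your advantage. First, for the hard direction of the characterization the paper argues directly: an infinite component contains infinitely many copies of some $x\in V_\F$, and a path between two copies $x^i,x^j$, pruned so that no vertex other than $x$ repeats, projects to an oriented cycle of net length $j-i$. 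You instead argue contrapositively via a height (potential) function $h$ with $h(v')-h(v)$ equal to each edge's displacement, which pins every lifted walk into a bounded band of layers; this is cleaner, replaces the pruning step by the standard equivalence between path-independence of displacement and vanishing of all closed-walk displacements, and works verbatim for closed walks rather than cycles. Second, your complexity accounting is sharper: the paper reruns its label-propagating search from \emph{every} node of $\F^\sigma$, paying $O(p^2)$ per node and $O(p^3)$ in total, whereas you observe that a single BFS per connected component decides feasibility of the height constraints, giving $O(p^2)$ overall; your Floyd--Warshall fallback is therefore unnecessary (an $O(p^2)$ algorithm is a fortiori an $O(p^3)$ one), and you in fact establish a slightly better bound than the theorem claims.
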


By Theorem \ref{thm:Gsigma}, let $\G=\G_{\eta\sigma^{\omega}}$. We
observe that it is sufficient to consider the case in which
$\D=\emptyset$ (hence $\G=\G_{\sigma^{\omega}}$) since
$\G_{\eta\sigma^\omega}$ has an infinite component if and only if
$\G_{\sigma^\omega}$ has one.

\smallskip

Let $\F^{i}$ be the $i^{th}$ copy of $\F$ in $\G$. Let $x^{i}$ be
the copy of vertex $x$ in $\F^i$. We construct a finite
 directed graph $\F^\sigma = (V^{\sigma},
E^{\sigma})$ as follows. Each node in $V^{\sigma}$ represents a
distinct connected component in $\F$. For simplicity, we assume
that $|V^\sigma| = |V_{\F}|$ and hence use $x$ to denote its own
component in $\F$. The case in which $|V^\sigma| < |V_\F|$ can be
treated in a similar way. For $x, y\in V_{\F}$, put $(x, y) \in
E^\sigma$ if and only if $y' \in \sigma(x')$ for some $x'$ and
$y'$ that are in the same component as $x$ and $y$, respectively.
Constructing  $\F^\sigma$ requires finding connected components of
$\F$ hence takes time $O(p^2)$. To prove the above theorem, we
make essential use of the following definition. See also
\cite{graph book}.

%The algorithm first computes the directed graph $\F^\sigma$. This
%involves checking all transitions of the unary automaton $\A$ and
%therefore requires time $O(p^2)$.

\begin{definition} \label{Dfn:edge-path} An {\bf \em oriented walk} in a directed graph $G$
is a subgraph $\P$ of $G$ that consists of a sequence of nodes
$v_0,...,v_k$ such that for $1\leq i \leq k$, either $(v_{i-1},
v_i)$ or $(v_i, v_{i-1})$ is an arc in $G$, and for each $0\leq
i\leq k$, exactly one of $(v_{i-1},v_i)$ and $(v_i,v_{i-1})$
belongs to $\P$. An oriented walk is an {\bf \em oriented cycle}
if $v_0=v_k$ and there are no repeated nodes in $v_1,...,v_k$.
% Let $x$ and $y$
%be nodes in $\F^\sigma$. An {\bf oriented path} from $x$ to $y$ is
%a $2$-tuple $\P=(\Lambda, \Gamma)$ such that $\Lambda =
%(v_1,\ldots,v_m)$ is a sequence of nodes in $\F^\sigma$ where
%$v_1=x$ and $v_m=y$; and $\Gamma = (e_1,\ldots,e_{m-1})$ is a
%sequence of directed edges in $\F^\sigma$ such that for each
%$1\leq i<m$, one of $e_i=(v_i, v_{i+1})$ or $e_i=(v_{i+1}, v_i)$
%holds. An oriented walk is called an {\bf oriented cycle} if $v_1=v_m$ and
%there are no repeated nodes in $\Lambda$ other than $v_1=v_m$.
\end{definition}

In an oriented walk $\P$, an arc $(v_i, v_{i+1})$ is called a {\bf forward arc} and $(v_{i+1},v_i)$ is called a {\bf backward arc}. The {\bf net length} of $\P$, denoted $disp(\P)$, is the difference between the number of forward arcs and backward arcs.
Note the net length can be negative. The next lemma establishes a connection between oriented cycles in $\F^\sigma$ and infinite components in $\G$.

%Given an oriented walk $\P = v_0,\ldots,v_m$ , we define the {\bf low point} of $\P$ as $\min\{disp( v_0 \ldots v_\ell) \mid 0\leq \ell\leq m\}$. The %low point of oriented walk $\P$ is at most $\min\{0, disp(\P)\}$, and hence is not positive. The next lemma establishes a connection between oriented %walks in $\F^\sigma$ and paths in $\G$.

%\begin{lemma} \label{lm:from_path_to_path}
%Let $\P$ be an oriented walk from $x$ to $y$ whose net length is
%$d$ and low point is $-\ell$ .  For every $i\geq \ell$, the
%oriented walk $\P$ defines a path $P^i$ in $\G$ from $x^i$ to
%$y^{i+d}$. Moreover, the smallest $j$ such that $P^i\cap \F^j \neq
%\emptyset$ is equal to $i-\ell$.
%\end{lemma}

%\begin{proof}
%The path $P_i$ is obtained by starting at vertex $x^i$ in $\G$ and
%following the edges in $\G$ determined by the sequence of edges in
%$\P$.
%\end{proof}

\begin{lemma}\label{lm:cycle}
There is an infinite component in $\G$ if and only if there is an
oriented cycle in $\F^{\sigma}$  such that the net length of the
cycle is positive.
\end{lemma}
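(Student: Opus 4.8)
The plan is to pass to the contracted layered picture implicit in the construction of $\F^\sigma$ and then read off infinite components as oriented walks. First I would stack infinitely many copies of $V^\sigma$, one at each level $i \in \omega$, and join $(x,i)$ to $(y,i+1)$ by an (undirected) edge exactly when $(x,y) \in E^\sigma$; call this infinite layered graph $H$. Because within a single copy $\F^i$ each connected component of $\F$ is itself connected, and because $(x,y)\in E^\sigma$ records precisely when some vertex of the $x$-component in $\F^i$ is $\sigma$-joined to some vertex of the $y$-component in $\F^{i+1}$, the connected components of $\G$ correspond to those of $H$. Since each level of $H$ is finite, a component of $\G$ is infinite if and only if the corresponding component of $H$ meets arbitrarily high levels; and as every edge of $H$ changes the level by exactly $1$, such a component is exactly one reaching every level $\geq i_0$ for some $i_0$.

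The bridge to $\F^\sigma$ is the observation that a path in $H$ projects, by forgetting levels, to an oriented walk $\P$ in $\F^\sigma$ in which an up-step is a forward arc and a down-step a backward arc; the net change in level along the path is then exactly $disp(\P)$. For the direction asserting that a positive oriented cycle yields an infinite component, I would take an oriented cycle $\P$ based at a node $x$ with $d = disp(\P) > 0$, choose a starting level $i$ large enough that tracing $\P$ never forces the level below $0$ (possible since $\P$ is finite, so its level excursions are bounded), and thereby obtain a path in $H$ from $(x,i)$ to $(x,i+d)$. Concatenating copies of this traversal links $(x,i), (x,i+d), (x,i+2d), \dots$, producing a single component of $H$ that meets arbitrarily high levels, i.e. an infinite component of $\G$.

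For the converse, suppose $\G$, equivalently $H$, has an infinite component $C$, so $C$ reaches arbitrarily high levels. I would fix a path in $C$ that ascends from a base vertex of level $\ell_0$ to a vertex of level at least $\ell_0 + |V^\sigma|$, and mark its \emph{record points}, the vertices attaining a strictly greater level than all earlier vertices of the path. Since every step changes the level by $1$, the record points occur at consecutive levels $\ell_0, \ell_0+1, \dots$, so there are more than $|V^\sigma|$ of them; by pigeonhole two record points carry the same node $x \in V^\sigma$. The segment of the path between them projects to a closed oriented walk at $x$ of strictly positive net length. Finally, any closed oriented walk decomposes into oriented cycles, and since $disp$ is additive over this decomposition, the positive total displacement forces at least one oriented cycle of positive net length, as required.

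The main obstacle is this last extraction step: a closed oriented walk of positive displacement need not itself be a cycle, so the argument rests on the additivity of $disp$ under cycle decomposition (each forward arc contributing $+1$ and each backward arc $-1$ irrespective of how the walk is split). The boundary constraint in the forward direction, keeping all traversed levels nonnegative, is a minor point handled by starting high enough.
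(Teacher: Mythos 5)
Your proposal is correct, and its skeleton---projecting paths of $\G$ onto oriented walks in $\F^\sigma$ and lifting oriented cycles back to paths in $\G$---is the same as the paper's; the forward direction (trace the cycle starting at a high enough level, then iterate) is essentially identical. The genuine difference is in the converse. The paper picks a vertex $x$ having infinitely many copies in the infinite component, joins two copies $x^i$, $x^j$ by a path, and argues that one may assume the path visits at most one copy of each other vertex, so that its projection is \emph{already} an oriented cycle of net length $j-i>0$; that cleanup step is left quite informal. You instead pass to the layered quotient graph $H$ (implicit in the paper's construction of $\F^\sigma$), take a path climbing at least $|V^\sigma|$ levels, use record points plus pigeonhole to find two occurrences of the same node of $V^\sigma$ at strictly increasing levels, and then extract a cycle from the resulting closed oriented walk by decomposing it into oriented cycles over which the net length is additive. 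This buys rigor exactly where the paper is sketchy: you never need to massage the path into one whose projection has no repeated nodes, because the additivity argument performs the extraction (incidentally, the paper uses this very decomposition trick later, inside the correctness proof of \texttt{Oriented-Cycle}, when it shortens the walk $\P\overleftarrow{\P'}$). Your argument also works directly when $|V^\sigma| < |V_\F|$, a case the paper defers to a closing remark. The cost is the extra scaffolding of $H$ and the record-point bookkeeping; the paper's version, granting its informal step, is shorter.
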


\begin{proof}
Suppose there is an oriented cycle $\P$ from $x$ to $x$ in $\F^{\sigma}$ of net length $m>0$.  For all $i\geq p$, $\P$ defines the path $P_i$ in $\G$ from $x^i$ to $x^{i+m}$ where $P_i$ lies in $\F^{i-p}\cup\cdots \cup\F^{i+p}$. Therefore, for a fixed
$i\geq p$, all vertices in the set $\{x^{j m+i} \mid j\in \omega\}$
belong to the same component of $\G$.  In particular, this implies
that $\G$ contains an infinite component.

\smallskip
% Why can find path between x^i, x^j with no other repetitions?

Conversely, suppose there is an infinite component $D$ in $\G$.
Since $\F$ is finite, there must be some $x$ in $V_\F$ such that
there are infinitely many copies of $x$ in $D$. Let $x^i$ and
$x^j$ be two copies of $x$ in $D$ such that $i < j$. Consider a
path between $x^i$ and $x^j$. We can assume that on this path
there is at most one copy of any vertex $y \in V_\F$ apart from
$x$ (otherwise, choose $x^j$ to be the copy of $x$ in the path
that has this property). By definition of $\G_{\sigma^\omega}$ and
$\F^\sigma$, the node $x$ must be on an oriented cycle of
$\F^\sigma$ with net length $j-i$.
\end{proof}

\begin{proof}[Proof of Theorem \ref{thm:infinite component}]
By the equivalence in Lemma \ref{lm:cycle}, it suffices to provide
an algorithm that decides if $\F^{\sigma}$ contains an oriented
cycle with positive net length.  Notice that the existence of an
oriented cycle with positive net length is equivalent to the
existence of an oriented cycle with negative net length.
Therefore, we give  an algorithm which finds oriented cycles with
non-zero net length.

\smallskip

For each node $x$ in $\F^\sigma$, we search for an oriented cycle
of positive net length from $x$ by creating a labeled queue of
nodes $Q_x$ which are connected to $x$.

\smallskip

\noindent\texttt{ALG:Oriented-Cycle}
\begin{enumerate}

\item Pick node $x \in \F^\sigma$ for which a queue has not been
built yet. Initially the queue $Q_x$ is empty. Let $d(x) = 0$, and
put $x$ into the queue. Mark $x$ as \emph{unprocessed}.  If queues
have been built for each $x \in \F^\sigma$, stop the process and
return \emph{NO}.

\item Let $y$ be the first \emph{unprocessed} node in $Q_x$. If
there are no \emph {unprocessed} nodes in $Q_x$, return to (1).

\item For each of the nodes $z$ in the set $\{z \mid (y,z)\in
E^\sigma \text{ or } (z,y) \in E^\sigma\}$, do the following.

\begin{enumerate}
    \item If $(y,z) \in E^\sigma$, set $d'(z) = d(y) +1$;
     if $(z,y) \in E^\sigma$, set $d'(z) = d(y) -1$. (If both hold, do
     steps (a), (b), (c) first for $(z,y)$ and then for $(y,z)$.)

    \item If $z \notin Q_x$, then set $d(z) = d'(z)$, put $z$
    into $Q_x$, and mark $z$ as \emph{unprocessed}.

    \item If $z \in Q_x$ then
    \begin{enumerate}
       \item if $d(z) = d'(z)$, move to next $z$,
       \item if $d(z) \neq d'(z)$, stop the process and return \emph{YES}.
    \end{enumerate}

\end{enumerate}

\item Mark $y$ as \emph{processed} and go back to (2).

\end{enumerate}

An important property of this algorithm is that when we are
building a queue for node $x$ and are processing $z$, both $d(z)$
and $d'(z)$ represent net lengths of paths from $x$ to $z$.

We claim that the algorithm returns \emph{YES} if and only if
there is an oriented cycle in $\F^\sigma$ with non-zero net
length. Suppose the algorithm returns \emph{YES}.  Then, there is
a base node $x$ and a node $z$ such that $d(z) \neq d'(z)$. This
means that there is an oriented walk $\P$ from $x$ to $z$ with net
length $d(z)$ and there is an oriented walk $\P'$ from $x$ to $z$
with net length $d'(z)$. Consider the oriented walk $\P\overleftarrow{\P'}$, where
$\overleftarrow{\P'}$ is the oriented walk $\P'$ in reverse
direction. Clearly this is an oriented walk from $x$ to $x$ with
net length $d(z) -d'(z) \neq 0$. If there are no repeated nodes in
$\P\overleftarrow{\P'}$, then it is the required oriented cycle.
Otherwise, let $y$ be a repeated node in $\P\overleftarrow{\P'}$
such that no nodes between the two occurrences of $y$ are
repeated. Consider the oriented walk between these two occurrences
of $y$, if it has a non-zero net length, then it is our required
oriented cycle; otherwise, we disregard the part between the two
occurrences of $z$ and make the oriented walk shorter without
altering its net length.

\smallskip

Conversely, suppose there is an oriented cycle $\P =
x_0,\ldots,x_m$ of non-zero net length where $x_0=x_m$. However,
we assume for a contradiction that the algorithm returns
\emph{NO}. Consider how the algorithm acts when we pick $x_0$ at
step (1). For each $0\leq i\leq m$, one can prove the following
statements by induction on $i$.

\begin{enumerate}
\item[$(\star$)] $x_i$ always gets a label $d(x_i)$

\item[$(\star\star$)] \ \ $d(x_i)$ equals the net length of the
oriented walk from $x_0$ to $x_i$ in $\P$.
\end{enumerate}

By the description of the algorithm, $x_0$ gets the label $d(x_0)
= 0$. Suppose the statements holds for $x_i$, $0\leq i<m$, then at
the next stage, the algorithm labels all nodes in $\{z \mid (z,
x_i) \in E^\sigma \text{ or } (x_i,z)\in E^\sigma\}$. In
particular, it calculates $d'(x_{i+1})$. By the inductive
hypothesis, $d'(x_{i+1})$ is the net length of the oriented walk
from $x_0$ to $x_{i+1}$ in $\P$. If $x_{i+1}$ has already had a
label $d(x_{i+1})$ and $d(x_{i+1}) \neq d'(x_{i+1})$, then the
algorithm would return \emph{YES}. Therefore $d(x_{i+1}) =
d'(x_{i+1})$.  By assumption on $\P$, $d(x_{m}) \neq 0$. However,
since $x_0 = x_m$, the induction gives that $d(x_m) = d(x_0) = 0$.
This is a contradiction, and thus the above algorithm is correct.

\smallskip

In summary, the following algorithm solves the infinite component
problem. Suppose we are given an automaton (with loop
constant $p$) which recognizes the  unary automatic graph of
finite degree $\G$. Recall that $p$ is also the cardinality of
$V_\F$. We first compute $\F^\sigma$, in time $O(p^2)$. Then we
run \texttt{Oriented-Cycle} to decide whether $\F^\sigma$ contains an
oriented cycle with positive net length. For each node $x$ in
$\F^\sigma$, the process runs in time $O(p^2)$. Since $\F^\sigma$
contains $p$ number of nodes, this takes time $O(p^3)$.

\smallskip

Note that Lemma \ref{lm:cycle} holds for the case when $|V_\F| >
|\F / \sim_{comp}|$. Therefore the algorithm above can be slightly
modified to apply to this case as well.
\end{proof}

\section{Deciding the infinity testing problem}

We next turn our attention to the {\bf infinity testing problem}
for unary automatic graphs of finite degree.  Recall that this
problem asks for an algorithm that, given a vertex $v$ and a graph
$\G$, decides if $v$ belongs to an infinite component. We prove the
following theorem.

\begin{theorem}\label{thm:InfTest}
The infinity testing problem for unary automatic graph of finite
degree $\G$ is solved in $O(p^3)$, where $p$ is the loop constant
of the automaton $\A$ recognizing $\G$. In particular, when
$\A$ is fixed, there is a constant time algorithm that decides the
infinity testing problem on $\G$.
%Suppose $G$ is a locally finite automatic graph represented by a
%unary automaton $\A$ with loop constant $p$. There exists a time $O(p^5)$
%algorithm to check whether vertex $x_i$ forms an infinite
%component. In particular, when $\A$ is fixed,
%there is a constant time algorithm to decide the infinity testing
%problem on $G$.
\end{theorem}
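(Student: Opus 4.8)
The plan is to reduce the infinity testing problem to questions about the finite directed graph $\F^\sigma$ that was already introduced for the infinite component problem, but now keeping track of \emph{which} vertex we start from. Given a query vertex $v$, I first locate $v$ in the graph $\G_{\eta\sigma^\omega}$: by the coordinate representation, $v$ is either a vertex of $\D$ or has the form $(x,i)$ for some $x \in V_\F$ and some copy index $i$. The key observation is that $v$ lies in an infinite component if and only if, after traveling through finitely many copies of $\F$, the component of $v$ meets a vertex $x \in V_\F$ that sits on an oriented cycle of $\F^\sigma$ with positive net length. This is essentially the localized version of Lemma~\ref{lm:cycle}: the ``if and only if'' there was proved component-by-component, so an infinite component exists precisely when some $x$ lies on a positive-net-length oriented cycle, and $v$'s component is infinite precisely when $v$ can reach such an $x$ within $\G$.

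The algorithm I would give has two phases. First, compute the set $C \subseteq V_\F$ of all vertices lying on an oriented cycle of positive net length; this is done by running \texttt{ALG:Oriented-Cycle} (or a mild variant that records the base nodes generating a \emph{YES}) and propagating reachability, which costs $O(p^3)$ and, crucially, does not depend on $v$. Second, for the input $v$, decide whether the component of $v$ in $\G$ reaches some $x$-copy with $x \in C$. Because $\G$ has finite degree and the edge pattern between consecutive copies is governed by the single map $\sigma$, I can test this by exploring the component of $v$ restricted to the ``window'' of copies $\F^{i-p},\ldots,\F^{i+p}$ (or equivalently by a reachability analysis in $\F^\sigma$ starting from the connected component of $v$ within $\F^i$), since any vertex of $C$ reachable at all is reachable within a bounded number of copies. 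This second phase takes time $O(|v| + p^2)$: we need $O(|v|)$ to read the input and determine the coordinates of $v$, and $O(p^2)$ for the bounded local search.

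The constant-time claim for fixed $\A$ follows from the structure of this decomposition. When $\A$ is fixed, $p$ is a constant, so the entire phase-one computation of $C$ is a one-time constant cost, and the set $C$ together with the reachability information in $\F^\sigma$ can be precomputed and stored. For a query $v$, the answer depends only on the component of $v$ inside $\F^i$ and on which components are reachable forward/backward to members of $C$ in $\F^\sigma$; since there are only finitely many such components and the reachability table is finite, the answer is determined by a table lookup indexed by the ($\sigma$-determined) component of $v$. Thus after the fixed precomputation, each query is answered in constant time (the constant being polynomial in $p$, as claimed), once the coordinates of $v$ are available.

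The main obstacle I anticipate is proving that bounded look-ahead suffices, i.e.\ that if the component of $v$ reaches \emph{any} copy $x^k$ with $x \in C$, then it already reaches such a copy within $O(p)$ consecutive blocks of $v$. This requires an argument in the same spirit as Lemma~\ref{lm:cycle}: a path from $v$ to a distant $x^k$ can be ``projected'' onto $\F^\sigma$, and because we only care about \emph{membership} in a positive-net-length cycle (not about the exact index $k$), repeated components along the path can be contracted without losing the property that $x \in C$ is reached. Making this contraction rigorous, and confirming that the resulting local search stays within the $2p+1$ window so that the per-query cost is genuinely $O(|v| + p^2)$, is where the real care is needed; the timing bookkeeping that yields the stated $O(p^3)$ overall and the constant-time-after-fixing consequence is then routine.
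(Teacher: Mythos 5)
Your reduction contains a genuine error: the criterion ``$v$ lies in an infinite component if and only if the component of $v$ meets a copy of some $x\in C$, where $C$ is the set of vertices lying on positive-net-length oriented cycles of $\F^\sigma$'' is false near the left end of the graph. Concretely, take $V_\F=\{y,a,b\}$, $E_\F=\emptyset$, $\sigma(a)=\{y,b\}$, $\sigma(b)=\{y\}$, $\sigma(y)=\emptyset$, and $\G=\G_{\sigma^\omega}$. Then $E^\sigma=\{(a,y),(a,b),(b,y)\}$, and $y$ lies on the oriented cycle $y,a,b,y$ (backward arc $(a,y)$, forward arcs $(a,b)$ and $(b,y)$) of net length $+1$; indeed every $y^k$ with $k\geq 1$ lies in the infinite component $y^1-a^0-b^1-y^2-a^1-b^2-\cdots$. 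But $y^0$ is an \emph{isolated} vertex of $\G$, because $\sigma(y)=\emptyset$ and copy $-1$ does not exist. Since the component $\{y^0\}$ trivially meets a copy of $y\in C$, your criterion (and hence your phase-two window search, however implemented) answers \emph{YES} on input $y^0$, which is wrong. The underlying problem is that an oriented cycle some of whose prefixes have negative net length can only be executed in $\G$ when there is enough room to the left of the starting copy; membership of $x$ in $C$ certifies that $x^k$ is in an infinite component only for $k\geq p$. Lemma \ref{lm:cycle} asserts the existence of \emph{some} infinite component and does not localize to a given vertex in the way you claim without such an index restriction.

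Moreover, the statement you yourself flag as the ``main obstacle''---that bounded look-ahead suffices---is exactly the mathematical content of the theorem, and it is left unproved; your contraction sketch addresses distance along $\F^\sigma$ but not the boundary phenomenon above. The paper takes a simpler route that sidesteps cycles entirely: by pigeonhole, if $x^i$ is connected to any $y^j$ with $|j-i|>p$, then some vertex repeats along the path, and the repeated segment can be shifted \emph{forward} indefinitely (Lemma \ref{lm:infinite}); forward shifts never collide with copy $0$. Hence $x^i$ is in an infinite component if and only if the breadth-first search \texttt{FiniteReach}, run in the window $\F^{i-\min(i,p)},\ldots,\F^{i+p}$, reaches some vertex at offset exactly $+p$ (Lemma \ref{lm:infTest-B}). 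That criterion is boundary-safe, gives the $O(p^3)$ bound, and, since it depends only on $x$ and $\min\{i,p\}$, immediately yields the constant-time claim for fixed $\A$. Your approach could probably be repaired by demanding that the component of $v$ reach some $x^k$ with $x\in C$ and $k\geq p$, but you would still owe a proof of the window lemma, which is where all the work lies.
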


For a fixed input $x^i$, we have the following lemma.

\begin{lemma}\label{lm:infinite}
If $x^i$ is connected to some $y^j$ such that $|j-i| > p$, then $x^i$ is in an infinite component.
\end{lemma}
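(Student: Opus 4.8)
The plan is to produce, inside the component $C$ of $x^i$, an infinite family of vertices by exploiting the periodicity of $\G=\G_{\eta\sigma^\omega}$ in the layer index. Recall (from the corollary to Theorem \ref{thm:Gsigma}) that $|V_\F|=p$ and that for every $\ell\geq 0$ the edges between consecutive copies $\F^\ell$ and $\F^{\ell+1}$ are governed by one and the same map $\sigma$, and the edges inside each copy by the fixed relation $E_\F$. Consequently the shift $\tau\colon (v,\ell)\mapsto (v,\ell+1)$ is a graph isomorphism from the subgraph induced on layers $\geq 0$ onto the subgraph induced on layers $\geq 1$: intra-copy edges are preserved because $E_\F$ does not depend on $\ell$, and inter-copy edges because every $\sigma_\ell$ equals $\sigma$. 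The key feature is that $\tau$ only ever \emph{raises} the layer index, so iterating it never approaches the bottom layer $0$ where the periodicity could break down.

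First I would reduce to a path that avoids $\D$. Without loss of generality assume $j>i$ (the case $j<i$ is symmetric, since the translation step below never lowers layers). Fix a path $P$ from $x^i$ to $y^j$. As $\D$ is attached only to $\F^0$, each time $P$ enters $\D$ it must re-enter the copies of $\F$ through layer $0$; hence the terminal sub-path $S$ of $P$ that ends at $y^j$ and stays within the copies of $\F$ starts either at $x^i$ or at a vertex of $\F^0$, that is, at some layer $\leq i$. Thus $S$ runs from a layer $\leq i$ up to layer $j$, and since the layer index changes by at most one along each edge, $S$ visits every layer in between, in particular more than $p$ distinct layers.

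Next comes the pigeonhole step. For each layer $\ell$ met by $S$, let $w_\ell$ be the first vertex of $S$ lying in $\F^\ell$, and let $u_\ell\in V_\F$ be its underlying vertex, so $w_\ell=u_\ell^{\,\ell}$. Since $S$ meets more than $p=|V_\F|$ layers, two of these first-passage vertices share an underlying vertex: there are layers $\ell_1<\ell_2$ with $u_{\ell_1}=u_{\ell_2}=:u$, and the stretch $Q$ of $S$ between $u^{\ell_1}$ and $u^{\ell_2}$ is a path lying entirely in layers $\geq 0$. Both $u^{\ell_1}$ and $u^{\ell_2}$ lie on $S\subseteq P$, hence in $C$. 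Now put $m=\ell_2-\ell_1>0$; applying the shift isomorphism, $\tau^{km}(Q)$ is a valid path from $u^{\ell_1+km}$ to $u^{\ell_1+(k+1)m}$ for every $k\geq 0$, because $Q$ lies in layers $\geq 0$ and $\tau$ only raises layers. Concatenating these paths yields a single walk through $u^{\ell_1},u^{\ell_1+m},u^{\ell_1+2m},\dots$, so $C$ contains infinitely many distinct vertices and $x^i$ lies in an infinite component.

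The main obstacle is organizational rather than conceptual: I must guarantee that the connecting sub-path $Q$ stays inside the copies of $\F$, so that the shift isomorphism applies, and that the translations only ever go upward. Reducing to the $\D$-free terminal segment $S$ secures the first point, while fixing the convention $j>i$ and using $\tau$ (never $\tau^{-1}$) secures the second; the boundary layer $0$ is the only place the periodicity could fail, and shifting strictly upward avoids it altogether.
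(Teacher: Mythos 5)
Your proof is correct and follows essentially the same route as the paper's: take a path from $x^i$ to $y^j$, apply the pigeonhole principle using $|V_\F|=p$ to find two copies $u^{\ell_1}$, $u^{\ell_2}$ of the same underlying vertex on it, and shift the connecting segment upward periodically to produce infinitely many vertices in the same component. The only difference is that you spell out details the paper's terse proof leaves implicit (the reduction to a terminal segment avoiding $\D$, and the explicit layer-shift isomorphism $\tau$), which is a careful elaboration rather than a different argument.
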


\begin{proof} Suppose such a $y^j$ exists.
Take a path $P$ in $\G$ from $x^i$ to $y^j$. Since $p$ is the cardinality of $V_\F$, there is $z\in V_\F$ such that $z^s$ and $z^t$ appear in $P$ with $s<t$. Therefore all nodes in the set $\{z^{s+(t-s)m}\mid m\in \omega\}$ are in the same component as $x^i$.
\end{proof}

Let $i'=min\{p,i\}$. To decide if $x^i$ and $y^j$ are in the same component, we run a breadth first search in $\G$ starting from $x^i$ and going through all vertices in $\F^{i-i'},\ldots, \F^{i+p}$. The algorithm is as follows:

\smallskip
\noindent\texttt{ALG: FiniteReach}
\begin{enumerate}

\item Let $i'= \min\{p, i\}$.

\item Initialize the queue $Q$ to be empty. Put the pair $(x,0)$ into $Q$ and mark it as \emph{unprocessed}.

\item If there are no \emph{unprocessed} pairs in $Q$, stop the process. Otherwise, let $(y,d)$ be the first \emph{unprocessed}
pair.  For arcs $e$ of the form $(y,z)$ or $(z,y)$ in $E^\sigma$, do the following.

\begin{enumerate}

\item If $e$ is of the form $(y,z)$, let $d'=d+1$; if $e$ is of the form $(z,y)$, let $d'=d-1$.

%Changed lower bound of i to i'
\item If $-i'\leq d'\leq p$  and $(z,d')$ is not in $Q$, then put $(z,d')$ into $Q$ and mark $(z,d')$ as \emph{unprocessed}.

\end{enumerate}

\item Mark $(y,d)$ as \emph{processed}, and go to (2).

\end{enumerate}

Note that any $y^j$ is reachable from $x^i$ on the graph $\G$ restricted on $\F^{i-i'}, \ldots, \F^{i+p}$ if and only if after running \texttt{FiniteReach} on the input $x^i$, the pair $(y,j-i)$ is in $Q$.
When running the algorithm we only use the exact value of the input $i$ when $i< p$ (we set $i'=p-1$ whenever $i\geq p$), so the running time of \texttt{FiniteReach} is bounded by the number of edges in $\G$ restricted to $\F^0,\ldots, \F^{2p}$. Therefore the running time is $O(p^3)$. Let $B = \{y \mid (y,p) \in Q\}$.

\begin{lemma} \label{lm:infTest-B}
Let $x\in V_\F$. $x^i$ is in an infinite component if and only if $B\neq \emptyset$.
\end{lemma}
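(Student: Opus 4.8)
Looking at Lemma~\ref{lm:infTest-B}, I need to prove that $x^i$ is in an infinite component if and only if $B = \{y \mid (y,p) \in Q\} \neq \emptyset$, where $Q$ is the output of running \texttt{FiniteReach} on input $x^i$.

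The plan is to establish both directions using Lemma~\ref{lm:infinite} as the key bridge. First I would observe the precise meaning of $B$: by the remark immediately preceding the lemma, a pair $(y,d)$ lands in $Q$ exactly when $y^{i+d}$ is reachable from $x^i$ within the restricted graph on $\F^{i-i'},\ldots,\F^{i+p}$. Thus $B \neq \emptyset$ means precisely that some vertex $y^{i+p}$ in the copy $\F^{i+p}$ is reachable from $x^i$ inside this finite band. Since $i+p$ differs from $i$ by exactly $p$, I must be careful with the strict inequality in Lemma~\ref{lm:infinite}, which requires $|j-i| > p$; reaching level $i+p$ is the boundary case, so the argument cannot be a direct quotation of that lemma and this is where I expect the main subtlety.

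For the direction $B \neq \emptyset \Rightarrow$ infinite component, suppose some $y^{i+p}$ is reachable from $x^i$. Because the graph $\G_{\sigma^\omega}$ is built by repeating the \emph{same} map $\sigma$ between every pair of consecutive copies, the structure is translation-invariant on copies with index $\geq i$ (and in fact for all sufficiently large indices, matching the $i\geq p$ convention in \texttt{FiniteReach}). The idea is that the very path witnessing $y^{i+p}$ reachable from $x^i$ can be translated forward: applying $\sigma$-homogeneity, $y^{i+p}$ is connected to some $y'^{i+2p}$ by the shifted path, and so on. Iterating, $x^i$ reaches vertices in $\F^{i+p}, \F^{i+2p}, \F^{i+3p}, \ldots$, all in the same component, which must therefore be infinite. (Alternatively, once I have a vertex reachable at distance exactly $p$ in copy-index, a single further step pushes past the strict bound, and then Lemma~\ref{lm:infinite} applies verbatim.)

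For the converse, suppose $x^i$ lies in an infinite component $C$. Since each copy $\F^k$ is finite and $C$ is infinite, $C$ must meet copies with arbitrarily large index, so in particular $C$ contains a vertex $y^j$ with $j - i > p$. Take a path in $\G$ from $x^i$ to $y^j$; this path starts in copy $i$ and must at some point first cross into copy $i+p+1$ or beyond, so it must pass through some vertex $w^{i+p}$ in copy $\F^{i+p}$. The crucial point is that one can choose the path so that its restriction up to the first time it reaches level $i+p$ stays within the band $\F^{i-i'},\ldots,\F^{i+p}$, which is exactly the region \texttt{FiniteReach} explores; here I expect to invoke the finiteness-of-$V_\F$ argument (as in the proof of Lemma~\ref{lm:infinite}) to prune any excursion that wanders outside the band without net progress. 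Hence $(w, p) \in Q$, giving $w \in B$, so $B \neq \emptyset$. The main obstacle throughout is justifying that confining the search to the fixed band $\F^{i-i'},\ldots,\F^{i+p}$ loses no connectivity information relevant to detecting infiniteness — this relies on the homogeneity of $\sigma^\omega$ and on the pigeonhole bound $|V_\F| = p$ controlling how far a ``productive'' detour can stray.
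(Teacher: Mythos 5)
Your converse direction (infinite component $\Rightarrow B\neq\emptyset$) is essentially the paper's argument: an infinite component must meet arbitrarily high copies, so a path from $x^i$ first hits $\F^{i+p}$ at some $w^{i+p}$, giving $w\in B$. Your worry about confining that path to the band $\F^{i-i'},\ldots,\F^{i+p}$ is legitimate (the paper is silent on it), and your pruning sketch is the right kind of repair. The genuine gap is in your forward direction. You correctly notice that $y^{i+p}$ is a boundary case for Lemma \ref{lm:infinite} (whose statement requires $|j-i|>p$), but your fix does not work: translation-invariance turns the path from $x^i$ to $y^{i+p}$ into a path from $x^{i+p}$ to $y^{i+2p}$; this shifted path begins at $x^{i+p}$, not at $y^{i+p}$, and nothing connects the two, so the iteration never gets started. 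Concretely, take $V_\F=\{a,b,c\}$ (so $p=3$), $E_\F=\emptyset$, $\sigma(a)=\{b\}$, $\sigma(b)=\{a,c\}$, $\sigma(c)=\emptyset$. Then $a^0 - b^1 - a^2 - b^3$ reaches level $p$, but $b^3$ and $a^3$ lie in \emph{different} components of $\G_{\sigma^\omega}$, so the shifted path $a^3 - b^4 - a^5 - b^6$ is useless for growing the component of $b^3$. Your parenthetical alternative (``one further step past level $i+p$'') fails for the same reason: such a step need not exist a priori --- its existence is essentially what is being proved.

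The correct repair is the pigeonhole argument inside the proof of Lemma \ref{lm:infinite}, which needs only that the path spans $p+1$ distinct levels --- and a path from $x^i$ to $y^{i+p}$ does span levels $i,\ldots,i+p$. Since $|V_\F|=p$, two vertices on the path share the same projection to $V_\F$, say $z^s$ and $z^t$ with $s<t$; the subpath from $z^s$ to $z^t$, shifted by successive multiples of $t-s$, chains with itself precisely because its two endpoints have \emph{matching} projections, so all vertices $z^{s+m(t-s)}$, $m\in\omega$, lie in the component of $x^i$. This endpoint-matching is exactly what your translation argument lacks, and it is why the paper can simply cite Lemma \ref{lm:infinite} here: the lemma's statement has a strict inequality, but its proof works verbatim at $|j-i|=p$.
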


\begin{proof}
Suppose a vertex $y\in B$, then there is a path from $x^i$ to $y^{i+p}$. By Lemma \ref{lm:infinite}, $x^i$ is in an infinite component.
Conversely, if $x^i$ is in an infinite component, then there must be some vertices in $\F^{i+p}$ reachable from $x^i$. Take a path from $x^i$ to a vertex $y^{i+p}$ such that $y^{i+p}$ is the first vertex in $\F^{i+p}$ appearing on this path. Then $y\in B$.
\end{proof}

\begin{proof}[Proof of Theorem \ref{thm:InfTest}]
We assume the input vertex $x^i$ is given by tuple $(x,i)$. The above lemma suggests a simple algorithm to check if $x^i$ is in an infinite component.

\smallskip
\noindent\texttt{ALG: InfiniteTest}
\begin{enumerate}

\item Run \texttt{FiniteReach} on vertex $x^{i}$, computing the set $B$ while building the queue $Q$.

\item For every $y\in B$, check if there is edge $(y,z) \in E^\sigma$. Return $YES$ if one such edge is found; otherwise, return $No$.

\end{enumerate}

Running \texttt{FiniteReach} takes $O(p^3)$ and checking for edge $(y,z)$ takes $O(p^2)$.  The running time is therefore $O(p^3)$. Since $x$ is bounded by $p$, if $\A$ is fixed, checking whether $x^i$ belongs to an infinite component takes constant time.
\end{proof}

%%%%%%%%%%%%%%%%%%%%%%%%%%%%%%%%%%%%%%%%%%%%%%%%%%%%%%

\section{Deciding the reachability problem}

Suppose $\G$ is a unary automatic graph of finite degree
represented by an automaton with loop constant $p$. The {\bf
reachability problem} on $\G$ is formulated as: given two vertices
$x^i,y^j$ in $\G$, decide if $x^i$ and $y^j$ are in the same
component. We prove the following theorem.

%It is known that the reachability problem for automatic graphs in
%general is undecidable. \cite{rubin}
%Moreover, there is an automatic graph with only two components
%whose reachability problem is not finite automata recognisable.

\begin{theorem}\label{thm:reachability}
Suppose $\G$ is a unary automatic graph of finite degree
represented by an automaton $\A$ of loop constant $p$. There
exists a polynomial-time algorithm that solves the reachability
problem on $\G$. For inputs $u,v$, the running time of the
algorithm is $O(|u| + |v| + p^{4})$.
\end{theorem}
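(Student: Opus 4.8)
The plan is to read the inputs $u=x^i$ and $v=y^j$ to recover the base vertices $x,y\in V_\F$ (or vertices of $\D$) and the copy indices $i,j$ in time $O(|u|+|v|)$, noting that a vertex in $\F^i$ has unary encoding of length $\Theta(p\cdot i)$, so $i=O(|u|/p)$ is obtained while scanning $u$. After extracting $\D,\F,\sigma,\eta$ and building $\F^\sigma$ in $O(p^2)$ (Theorem \ref{thm:Gsigma} and its corollary), I would assume without loss of generality that $i\le j$ and split the problem into a bounded-range regime and a long-range regime according to the sizes of $\min(i,j)$ and of $j-i$.

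For the long-range, boundary-free behaviour I would analyse oriented walks in $\F^\sigma$ exactly as in the discussion preceding Lemma \ref{lm:cycle}. Regarding each arc of $\F^\sigma$ as carrying displacement $+1$ and its reverse as $-1$, a path in $\G$ between two high-level copies projects to an oriented walk whose net length (in the sense of $disp$) equals the difference of copy indices. The key structural fact I would establish is: for $\min(i,j)\ge 2p$, the vertex $x^i$ reaches $y^j$ if and only if $x$ and $y$ lie in the same connected component $C$ of the underlying undirected graph of $\F^\sigma$ and $j-i\equiv \delta_0(x,y)\pmod{g_C}$, where $\delta_0(x,y)$ is the net length of any fixed oriented walk from $x$ to $y$ inside $C$ and $g_C$ is the $\gcd$ of the net lengths of all oriented cycles of $C$ (with the convention that $g_C=0$ forces $j-i=\delta_0(x,y)$). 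The quantities $\delta_0$ and $g_C$ are computed once, for every component, by fixing a spanning forest of $\F^\sigma$, assigning level-potentials along tree arcs, and taking the $\gcd$ of the discrepancies over the non-tree arcs, at cost $O(p^2)$. By Lemma \ref{lm:cycle} the case $g_C>0$ is exactly the infinite-component case, so this criterion simultaneously records which high-level vertices are infinite.

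It remains to treat the boundary, i.e.\ the vertices of $\D$ and the low copies $\F^0,\dots,\F^{O(p)}$, where downward oriented cycles may be blocked by the floor at level $0$. When $i<2p$ I would run a \texttt{FiniteReach}-type breadth-first search from $x^i$ through the finite region $\D\cup\F^0\cup\cdots\cup\F^{3p}$, which has $O(p^2)$ vertices and $O(p^3)$ edges and so costs $O(p^3)$. If $j\le 3p$ this search already contains $y^j$ and decides the query directly; if $j>3p$, it instead records the set of bulk \emph{anchors} $z^k$ with $2p\le k\le 3p$ reachable from $x^i$, and then $x^i$ reaches $y^j$ if and only if some anchor $z^k$ satisfies the long-range criterion above with $y^j$ (necessarily with $g_C>0$). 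Running such a search once per base vertex, together with the $O(p^2)$ lattice computation and the $O(p^2)$ anchor test, keeps the total within $O(|u|+|v|+p^4)$.

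\textbf{The hard part} will be the structural lemma gluing the two regimes: I must show that an arbitrary path, which may make excursions far above $\max(i,j)$ or plunge toward the floor, can always be replaced by one that either stays inside the $O(p)$-window or is witnessed purely by the periodic data $(\delta_0,g_C)$. The upward direction is the delicate point: an excursion rising more than $p$ levels must, by the pigeonhole principle on $V_\F$, repeat a base vertex and hence trace an oriented cycle, and I would use such cycles both to justify that the set of realizable displacements between $x$ and $y$ is exactly the coset $\delta_0(x,y)+g_C\mathbb{Z}$ and to reroute or excise the excursion without moving its endpoints. Making this rerouting respect the floor at level $0$ — so that the boundary-free criterion becomes valid precisely once $\min(i,j)$ exceeds the $O(p)$ threshold while the windowed search captures everything below it — is where the argument will need the most care.
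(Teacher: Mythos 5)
Your architecture --- a gcd/coset criterion $j-i\equiv\delta_0(x,y)\pmod{g_C}$ for the bulk of the graph, plus a windowed BFS with anchors near the floor --- is genuinely different from the paper's proof, which never computes any gcd: the paper instead proves a same-level criterion (Lemma \ref{lm:reach_reach}: $x^i$ and $y^i$ lie in the same component iff both are in infinite components and $y\in Reach(x)$), lifts it to arbitrary level differences via iterated closure sets $Cl_k(x)=Reach(\sigma(Cl_{k-1}(x)))$ (Lemma \ref{lm:reach_closure}), and then caps the number of closures to be computed by the periodicity $Cl_0(x)=Cl_r(x)$ for an explicitly extracted $r\le p$ (Lemma \ref{lm:reach_repeat}), giving the $O(|u|+|v|+p^4)$ bound. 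If your coset lattice could be justified, it would buy a cleaner arithmetic description of reachability and a cheaper bulk computation; the paper's $r$ plays the role of a (not necessarily minimal) period without ever needing the full coset characterization.

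However, there is a genuine gap, and it sits exactly where you flagged it. The soundness direction of your long-range criterion is easy (a path projects to an oriented walk, and every walk's net length lies in $\delta_0(x,y)+g_C\mathbb{Z}$), but the completeness direction --- from $j-i\equiv\delta_0(x,y)\pmod{g_C}$ and $\min(i,j)\ge 2p$, produce a path that never descends below level $0$ --- is the entire difficulty of the theorem, and your proposal does not prove it; it only names the tools. Realizing a prescribed residue may require combining several cycles of different net lengths, reached by connecting walks, and these traversals must be scheduled so that the partial levels never dip more than $\min(i,j)$ below the start. A straightforward scheduling argument bounds the dip only by roughly $3p$ (up to $p$ to reach a cycle, up to $p$ inside a single traversal, up to $p$ more from the boundary heights between traversals), so your specific threshold $2p$, and hence the placement of anchors in $[2p,3p]$, are unjustified as stated and may have to move. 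This floor-respecting rerouting is precisely what the paper's proof of Lemma \ref{lm:reach_reach} labors over (the path-lengthening argument when $i<p$ and the segment-swapping surgery for paths that climb too high), which is evidence that it cannot be waved off. Note also that $g_C>0$ does not make every vertex over $C$ belong to an infinite component --- low copies can be cut off by the floor, e.g.\ a descending chain feeding a positive self-loop --- so your remark that the lattice data ``simultaneously records which high-level vertices are infinite'' itself depends on the same unproven threshold analysis.
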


We restrict to the case when $\G=\G_{\sigma^\omega}$.  The proof can
be modified slightly to work in the more general case, $\G =
\G_{\eta \sigma^\omega}$.

\smallskip

Since, by Theorem \ref{thm:InfTest}, there is an $O(p^3)$-time
algorithm to check if $x^{i}$ is in a finite component, we can
work on the two possible cases separately. We first deal with the
case when the input $x^i$ is in a finite component.
By Lemma \ref{lm:infinite}, $x^i$ and $y^j$ are in the same
(finite) component if and only if after running
\texttt{FiniteReach} on the input $x^i$, the pair $(y,j-i)$ is in
the queue $Q$.

\begin{corollary} \label{cr:finite reach}
If all components of $\G$ are finite and we represent $(x^i, y^j)$
as $(x^i, y^j, j-i)$, then there is an $O(p^3)$-algorithm deciding
if $x^i$ and $y^j$ are in the same component.\qed
\end{corollary}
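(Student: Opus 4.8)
The plan is to package the reachability query as a bounded breadth-first search, reducing reachability in all of $\G$ to reachability inside a finite window of copies of $\F$, and then simply cite the correctness and running-time analysis of \texttt{FiniteReach} already carried out above. The hypothesis that every component of $\G$ is finite is precisely what legitimizes this reduction.

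First I would apply the contrapositive of Lemma \ref{lm:infinite}: since $x^i$ lies in a finite component, it cannot be connected to any $z^k$ with $|k-i| > p$. Hence every vertex of the component of $x^i$ lies in one of the copies $\F^{i-i'},\ldots,\F^{i+p}$, where $i'=\min\{p,i\}$ (the lower cutoff $i'$ instead of $p$ accounts for the absence of copies below $\F^0$; note $i-i'=\max\{0,i-p\}$). The essential consequence is that the whole component is contained in this window, so any path witnessing that $y^j$ lies in the component of $x^i$ stays inside the window. Therefore, for the purpose of this query, reachability in $\G$ coincides with reachability in the subgraph of $\G$ restricted to $\F^{i-i'},\ldots,\F^{i+p}$. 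I would then invoke the observation recorded just before \texttt{FiniteReach}: $y^j$ is reachable from $x^i$ in this restricted subgraph if and only if the pair $(y,j-i)$ appears in the queue $Q$ produced by running \texttt{FiniteReach} on $x^i$.

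The algorithm is thus to run \texttt{FiniteReach} on $(x,i)$ and test whether $(y,j-i)$ belongs to $Q$; because the input is presented as $(x^i,y^j,j-i)$, the displacement $j-i$ is available directly and no arithmetic on the large indices $i,j$ is required. Building $Q$ costs $O(p^3)$ by the running-time analysis of \texttt{FiniteReach}, and since $Q$ holds at most $O(p^2)$ pairs (at most $p$ choices for the first coordinate and at most $2p+1$ displacements in the range $[-i',p]$), the membership test costs $O(p^2)$, dominated by $O(p^3)$. The only step demanding care — and the one I expect to be the main obstacle — is the confinement argument of the first step: one must argue that restricting the search to $\F^{i-i'},\ldots,\F^{i+p}$ discards no reachable vertex of the component. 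This is exactly where finiteness of all components is used, since otherwise a connecting path could leave the window and the windowed search would be unsound.
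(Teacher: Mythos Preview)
Your proposal is correct and follows essentially the same approach as the paper: the corollary is stated with an immediate \qed, its content being the sentence just preceding it, namely that by Lemma~\ref{lm:infinite} (in contrapositive form) a finite component is confined to the window $\F^{i-i'},\ldots,\F^{i+p}$, so running \texttt{FiniteReach} on $x^i$ and testing whether $(y,j-i)\in Q$ decides reachability in $O(p^3)$. The only minor slip is a locational one: the observation you invoke about \texttt{FiniteReach} characterizing restricted reachability is recorded just \emph{after} the algorithm, not before it.
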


Now, suppose that $x^i$ is in an infinite component. We start with
the following question: given $y\in V_\F$, are $x^i$ and $y^i$ in
the same component in $\G$?  To answer this, we present an
algorithm that computes all vertices $y\in V_\F$ whose $i^{th}$ copy
lies in the same $\G$-component as $x^i$. The algorithm is similar
to \texttt{FiniteReach}, except that it does not depend on the
input $i$. Line(3b) in the algorithm is changed to the following:

\smallskip

\indent(3b) If $  -p\leq d'\leq p$ and $(z,d')$ is not in $Q$,
then put $(z,d')$ into $Q$ and mark $(z,d')$ as
\emph{unprocessed}.

\smallskip

We use this modified algorithm to define the set $Reach(x) = \{y
\mid (y,0) \in Q\}$. Intuitively, we can think of the algorithm as
a breadth first search through $\F^{0} \cup \cdots \cup \F^{2p}$
which originates at $x^p$. Therefore, $y\in Reach(x)$ if and only
if there exists a path from $x^p$ to $y^p$ in $\G$ restricted to
$\F^{0} \cup \cdots \cup \F^{2p}$.

\begin{lemma} \label{lm:reach_reach}
Suppose $x^i$ is in an infinite component. The vertex $y^i$ is in
the same component as $x^i$ if and only if $y^i$ is also in an
infinite component and $y \in Reach(x)$.
\end{lemma}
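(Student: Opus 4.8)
The plan is to treat the two directions separately, with translation invariance of $\G_{\sigma^\omega}$ as the central tool: shifting every level index up by one is a graph isomorphism from $\G$ onto the subgraph induced by levels $\ge 1$, the only asymmetry being the floor at level $0$. Recall from the construction of $Reach$ that $y\in Reach(x)$ holds exactly when there is a path from $x^p$ to $y^p$ inside $\F^0\cup\cdots\cup\F^{2p}$, i.e.\ within radius $p$ of level $p$. Consequently, for any $i\ge p$, translating such a window path up by $i-p$ shows that $y\in Reach(x)$ iff $x^i$ and $y^i$ are joined by a path confined to $\F^{i-p}\cup\cdots\cup\F^{i+p}$. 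The first easy observation is that, in the forward direction, the clause ``$y^i$ is in an infinite component'' is immediate: $y^i$ lies in the same component as $x^i$, which is infinite by hypothesis.

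For the forward direction the real content is to show $y\in Reach(x)$, i.e.\ to confine an arbitrary path from $x^i$ to $y^i$ to radius $p$ about level $i$ (after which translation to level $p$ finishes the job, the floor at $0$ causing no harm since levels below $0$ are simply never used). I would start from any path $P$ from $x^i$ to $y^i$ and, whenever $P$ reaches a level above $i+p$ or below $i-p$, fold it inward. Since $\F$ has only $p$ vertices, among the first vertices that $P$ visits at $p+1$ consecutive levels two must be copies of one and the same vertex $z\in V_\F$, giving occurrences $z^{\ell_1},z^{\ell_2}$ with $0<\ell_2-\ell_1\le p$ on an upward excursion; by translation invariance the segment between them can be excised while lowering the remainder by $\ell_2-\ell_1$, and the mirror-image excision on the return leg keeps both endpoints at level $i$ and preserves net displacement $0$. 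Iterating drives the maximal and minimal levels of $P$ into the band from $i-p$ to $i+p$. I expect this folding step---removing an excursion while \emph{simultaneously} preserving the two endpoints at level $i$---to be the main obstacle; it is exactly the point where the oriented-cycle analysis of Lemma \ref{lm:cycle} must be invoked to guarantee that the excised excursion can be balanced.

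For the backward direction, suppose $y\in Reach(x)$ and $y^i$ is in an infinite component. When $i\ge p$, translating the window path for $x^p\sim y^p$ up by $i-p$ immediately yields a path from $x^i$ to $y^i$, and neither infinitude hypothesis is needed. The case $i<p$ is where ``$y^i$ is in an infinite component'' becomes essential: a window path realizing $y\in Reach(x)$ may dip toward level $0$, so it cannot be translated down to be centered at level $i$, and indeed without infinitude $x^i$ and $y^i$ could be separated by the floor. Here I would use the ladder supplied by Lemma \ref{lm:cycle}: each of $x^i$ and $y^i$, lying in an infinite component, connects upward along an oriented cycle of positive net length $m\le p$, each rung realized within radius $p$, so both vertices reach the interior region of levels $\ge p$ where translation invariance is unobstructed; applying the $i\ge p$ case there identifies the two components, and climbing back down gives $x^i\sim y^i$. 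Coordinating the two upward climbs so that they land in a common translate of the $Reach$ window is the delicate point of this case, and is resolved using the periodicity of the ladder together with the hypothesis $y\in Reach(x)$.
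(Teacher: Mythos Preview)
Your overall architecture matches the paper's: translation invariance, a path-confinement argument for the forward direction, and a lift to high levels for the backward direction when $i<p$. But the forward direction has a real gap, and the backward direction takes a different route from the paper.

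\textbf{Forward direction.} The ``mirror-image excision'' does not work as you describe it. Pigeonhole on the ascending leg yields a repeat $s^j,s^{j+k}$ and on the descending leg a repeat $t^m,t^{m+n}$, but in general $k\neq n$, so excising on one side and lowering, then excising on the other side, does \emph{not} leave both endpoints at level $i$. Invoking Lemma~\ref{lm:cycle} does not repair this: that lemma only asserts the existence of some oriented cycle of nonzero net length in $\F^\sigma$ and gives no mechanism for balancing two given gap lengths. The paper does not appeal to Lemma~\ref{lm:cycle} here at all; instead it uses both repeats simultaneously in one explicit re-routing, passing through the sequence
\[
x^i,\; s^j,\; t^{m+n-k},\; t^{m-k},\; s^{j-n},\; s^{j+k-n},\; t^m,\; y^i,
\]
with each consecutive pair joined by a translate of a segment of $P$. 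This strictly lowers the maximum level while keeping the minimum above $i-p$, and one iterates. This interleaving is precisely the missing idea in your plan.

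\textbf{Backward direction, $i<p$.} You propose to climb $x^i$ and $y^i$ separately to levels $\ge p$ along their infinite components and then apply the easy $i\ge p$ case. The paper instead stays with the window path from $x^p$ to $y^p$: it finds periods $r,r'$ with $x^p\sim x^{p+rm}$ and $y^p\sim y^{p+r'm}$ for all $m$, reroutes the path through level $p+rr'$, and observes that the minimum level of the resulting $x^p$--$y^p$ path is strictly higher; iterating eventually produces a path that can be translated down to level $i$. Your endpoint-climbing approach can be completed, but to coordinate the two climbs so they land at a common level you will need exactly this period argument (and the observation that $x^i$ infinite implies $x^p$ infinite), so Lemma~\ref{lm:cycle} again gives no shortcut over the paper's direct construction.
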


\begin{proof}
Suppose $y^i$ is in an infinite component and $y \in Reach(x)$. If
$i\geq p$, then the observation above implies that there is a path
from $x^i$ to $y^i$ in $\F^{i-p} \cup \cdots \cup \F^{i+p}$. So,
it remains to prove that $x^i$ and $y^i$ are in the same component
even if $i<p$.

\smallskip

Since $y\in Reach(x)$, there is a path $P$ in $\G$ from $x^p$ to $y^p$. Let $\ell$ be the least number such that $\F^\ell \cap P
\neq \emptyset$. If $i \geq p-\ell$, then it is clear that $x^i$ and $y^i$ are in the same component.  Thus, suppose that
$i<p-\ell$. Let $z$ be such that $z^\ell \in P$. Then $P$ is $P_1P_2$ where $P_1$ is a path from $x^p$ to $z^\ell$ and $P_2$
is a path from $z^\ell$ to $y^p$.   Since $x^i$ is in an infinite component, it is easy to see that $x^p$ is also in an infinite component.  There exists an $r>0$ such that all vertices in the set $\{x^{p+rm} \mid m\in \omega\}$ are in the same component. Likewise, there is an $r'>0$
such that all vertices in $\{y^{p+r'm} \mid m\in \omega\}$ are in the same component. Consider $x^{p+rr'}$ and $y^{p+rr'}$. Analogous to
the path $P_1$, there is a path $P_1'$ from $x^{p+rr'}$ to $z^{\ell+rr'}$.  Similarly, there is a path $P_2'$ from
$z^{\ell+rr'}$ to $y^{p+rr'}$. We describe another path $P'$ from $x^p$ to $y^p$ as follows. $P'$ first goes from $x^p$ to
$x^{p+rr'}$, then goes along $P_1'P_2'$ from $x^{p+rr'}$ to $y^{p+rr'}$ and finally goes to $y^p$. Notice that the least
$\ell'$ such that $\F_{\ell'} \cap P' \neq \emptyset$ must be larger than $\ell$.  We can iterate this procedure of lengthening
the path between $x^p$ and $y^p$ until $i< p-\ell'$, as is required to reduce to the previous case.

\smallskip

To prove the implication in the other direction, we assume that
$x^i$ and $y^i$ are in the same infinite component. Then $y^i$ is,
of course, in an infinite component. We want to prove that $y\in
Reach(x)$. Let $i'=\min\{p, i\}$. Suppose there exists a path $P$
in $\G$ from $x^i$ to $y^i$ which stays in $\F^{i-i'}\cup \cdots
\cup \F^{i+p}$. Then, indeed, $y\in Reach(x)$. On the other hand,
suppose no such path exists.  Since $x^i$ and $y^i$ are in the
same component, there is some path $P$ from $x^i$ to $y^i$. Let
$\ell(P)$ be the largest number such that $P\cap \F^{\ell(P)} \neq
\emptyset$. Let $\ell'(P)$ be the least number such that $P\cap
\F^{\ell'(P)} \neq \emptyset$. We are in one of two cases:
$\ell(P) > i+p$ or $\ell'(P) < i-p$.  We will prove that if
$\ell(P) > i+p$ then there is a path $P'$ from $x^i$ to $y^i$ such
that $\ell(P') < \ell(P)$ and $\ell'(P') \geq i-p$. The case in
which $\ell'(P) < i-p$ can be handled in a similar manner.

\smallskip

Without loss of generality, we assume $\ell'(P) =i$ since
otherwise we can change the input $x$ and make $\ell'(P) = i$. Let
$z$ be a vertex in $\F$ such that $z^{\ell(P)} \in P$. Then $P$ is
$P_1P_2$ where $P_1$ is a path from $x^i$ to $z^{\ell(p)}$ and
$P_2$ is a path from $z^{\ell(p)}$ to $y^i$. Since $\ell(P) >i+p$,
there must be some $s^j$ and $s^{j+k}$ in $P_1$ such that $k>0$.
For the same reason, there must be some $t^m$ and $t^{m+n}$ in
$P_2$ such that $n>0$. Therefore, $P$ contains paths between any
consecutive pair of vertices in the sequence $(x^i, s^j, s^{k+j},
z^p, t^{m+n}, t^m, y^i)$. Consider the following sequence of
vertices:
\[
(x^i, s^j, t^{m+n-k},t^{m-k}, s^{j-n}, s^{j+k-n}, t^m, y^i).
\]
It is easy to check that there exists a path between each pair of
consecutive vertices in the sequence. Therefore the above sequence
describes a path $P'$ from $x^i$ to $y^i$. It is easy to see that
$\ell(P') = \ell(P) - n$. Also since $\ell'(P) = i$, $\ell'(P') >
i-p$. Therefore $P'$ is our desired path.
\end{proof}

\smallskip

In the following, we abuse notation by using $Reach$ and $\sigma$
on subsets of $V_{\F}$.  We inductively define a sequence
$Cl_0(x), Cl_1(x), \ldots $ such that each $Cl_k(x)$ is a subset
of $V_{\F}$.  Let $Cl_0(x)=Reach(x)$ and For $k>0$, we define
$Cl_k(x) = Reach(\sigma(Cl_{k-1}(x)))$. The following lemma is
immediate from this definition.

\begin{lemma} \label{lm:reach_closure}
Suppose $x^i$ is in an infinite component, then $x^i$ and $y^j$
are in the same component if and only if $y^j$ is also in an
infinite component and $y\in Cl_{j-i}(x)$.\qed
\end{lemma}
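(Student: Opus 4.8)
The plan is to prove the equivalence by induction on $k := j-i$, which I may assume to be non-negative: the relation ``lies in the same component'' is symmetric, so if $j<i$ I interchange $x^i$ and $y^j$ and argue with $Cl_{i-j}(y)$. Throughout I would use that $\G_{\sigma^\omega}$ is invariant under the shift $(v,m)\mapsto(v,m+1)$, since every $\sigma_m$ equals $\sigma$; this transports statements about $Reach$ from level $p$ to any level. The base case $k=0$ needs no work, as $Cl_0(x)=Reach(x)$ makes the claim identical to Lemma \ref{lm:reach_reach}.

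For the inductive step in the forward direction, I assume $x^i$ and $y^{i+k}$ lie in the same (hence infinite) component and fix a path $P$ between them. Since $P$ runs from level $i$ to level $i+k$ it contains an edge crossing upward from level $i+k-1$ to level $i+k$; I take $w^{i+k-1}\to w'^{i+k}$ to be the \emph{last} such edge. By that choice the suffix of $P$ from $w'^{i+k}$ never drops below level $i+k$ (otherwise it would have to climb back, producing a later upward crossing), so it witnesses $w'^{i+k}\sim y^{i+k}$; because $y^{i+k}$ is in an infinite component, so is $w'^{i+k}$, and Lemma \ref{lm:reach_reach} applied at level $i+k$ gives $y\in Reach(w')$. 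The prefix of $P$ shows $x^i\sim w^{i+k-1}$, which is in an infinite component as it is adjacent to $w'^{i+k}$; the inductive hypothesis then yields $w\in Cl_{k-1}(x)$. Since the crossing edge forces $w'\in\sigma(w)$, I conclude $w'\in\sigma(Cl_{k-1}(x))$ and hence $y\in Reach(w')\subseteq Cl_k(x)$.

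The backward direction should be the easier one. If $y^{i+k}$ is in an infinite component and $y\in Cl_k(x)=Reach(\sigma(Cl_{k-1}(x)))$, then unwinding the definitions gives $w\in Cl_{k-1}(x)$ and $w'\in\sigma(w)$ with $y\in Reach(w')$. By shift-invariance, $y\in Reach(w')$ provides an actual path from $w'^{i+k}$ to $y^{i+k}$, so $w'^{i+k}\sim y^{i+k}$ lies in an infinite component; the edge supplied by $w'\in\sigma(w)$ then puts $w^{i+k-1}$ into that same infinite component, so the inductive hypothesis gives $x^i\sim w^{i+k-1}$, and the chain $x^i\sim w^{i+k-1}\sim w'^{i+k}\sim y^{i+k}$ closes the step.

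I expect the forward direction to be the main obstacle. A path in $\G$ may oscillate across arbitrarily many copies, and the whole argument hinges on isolating the last upward crossing so that the suffix stays at levels $\ge i+k$ --- letting Lemma \ref{lm:reach_reach} absorb all the bounded wiggling into a single application of $Reach$ --- while the prefix reduces cleanly to the inductive hypothesis. The accompanying bookkeeping, namely checking at each stage that $w^{i+k-1}$ and $w'^{i+k}$ genuinely lie in infinite components so that the hypotheses of Lemma \ref{lm:reach_reach} and of the induction are met, is routine but must not be skipped.
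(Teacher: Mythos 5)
Your base case and forward direction are sound: the last-upward-crossing decomposition correctly splits a path into a suffix that stays at levels $\geq i+k$ (handled by Lemma \ref{lm:reach_reach} applied at level $i+k$ with base $w'$) and a prefix handled by the inductive hypothesis, and the crossing edge gives $w'\in\sigma(w)$. The gap is in the backward direction, at the step ``by shift-invariance, $y\in Reach(w')$ provides an actual path from $w'^{i+k}$ to $y^{i+k}$.'' The shift $(v,m)\mapsto(v,m+1)$ is an embedding of $\G$ onto the subgraph on levels $\geq 1$, so it transports $Reach$-facts \emph{upward} only: when $i+k<p$, the path from $w'^p$ to $y^p$ supplied by $Reach$ lives in $\F^0\cup\cdots\cup\F^{2p}$ and shifting it \emph{down} by $p-(i+k)$ may push it below level $0$. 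This is not a bookkeeping issue: the intermediate facts your chain needs can actually be false. Take $V_\F=\{f,g,g',h,e\}$, $E_\F=\emptyset$, $\sigma(f)=\{f\}$, $\sigma(g)=\{e\}$, $\sigma(g')=\{f\}$, $\sigma(h)=\{g,g'\}$, $\sigma(e)=\emptyset$ (so $p=5$), and $x=y=f$, $i=0$, $j=k=1$. Both $f^0$ and $f^1$ lie in the infinite component (the $f$-spine), and $f\in Cl_1(f)$ via the legitimate witnesses $w=g\in Reach(f)$, $w'=e\in\sigma(g)$, $f\in Reach(e)$ (the paths $g^5h^4g'^5f^6f^5$ and $e^5g^4h^3g'^4f^5$ stay inside the window). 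Yet $e^1$ and $g^0$ are each other's only neighbours, so $\{g^0,e^1\}$ is a finite component: $e^1\not\sim f^1$ and $g^0\not\sim f^0$, so every link of your chain $x^i\sim w^{i+k-1}\sim w'^{i+k}\sim y^{i+k}$ fails, even though the lemma's conclusion holds (via the spine edge $f^0$--$f^1$). Choosing a ``good'' witness chain is not a way out either: the existence of such a chain is essentially what the lemma asserts.

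The repair needs the lifting technique that the paper uses inside the proof of Lemma \ref{lm:reach_reach} (and in the discussion before Lemma \ref{lm:reach_repeat}). From an arbitrary witness chain, upward shifts alone give $x^m\sim y^{m+k}$ for \emph{all} $m\geq p$. Then use the two infinitude hypotheses: since $x^i$ and $y^{j}$ lie in infinite components, there are $r,r'>0$ such that all vertices $x^{i+t r r'}$ lie in one component and all $y^{j+t r r'}$ lie in one component, for $t\in\omega$. Choosing $t$ with $i+trr'\geq p$ yields $x^i\sim x^{i+trr'}\sim y^{j+trr'}\sim y^{j}$. In other words, the low-level case cannot be pushed through the witness chain vertex by vertex; it must be resolved globally, exactly as in Lemma \ref{lm:reach_reach}, whose hard case ($i<p$) is the same phenomenon you are implicitly assuming away.
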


We can use the above lemma to construct a simple-minded algorithm
that solves the reachability problem on inputs $x^i, y^j$.

\smallskip

\noindent\texttt{ALG: Na\"iveReach}
\begin{enumerate}

\item Check if each of $x^i$, $y^j$ are in an infinite component
of $\G$ (using the algorithm of Theorem \ref{thm:InfTest}).

\item If exactly one of $x^i$ and $y^j$ is in a finite component,
then return \emph{NO}.

\item If both $x^i$ and $y^j$ are in finite components, then run
\texttt{FiniteReach} on input $x^i$ and check if $(y,j-i)$ is in
$Q$.

\item If both $x^i$ and $y^j$ are in infinite components, then
compute $Cl_{j-i}(x)$. If $y\in Cl_{j-i}(x)$, return \emph{YES};
otherwise, return \emph{NO}.

\end{enumerate}

We now consider the complexity of this algorithm. The set
$Cl_0(x)$ can be computed in time $O(p^3)$.  Given $Cl_{k-1}(x)$,
we can compute $Cl_k(x)$ in time $O(p^3)$ by computing $Reach(y)$ for any $y\in \sigma(Cl_{k-1}(x))$. Therefore, the total
running time of \texttt{Na\"iveReach} on input $x^i$, $y^j$ is
$(j-i)\cdot p^3$. We want to replace the multiplication with
addition and hence tweak the algorithm.
\smallskip

From Lemma \ref{lm:infTest-B}, $x^i$ is in an infinite component in $\G$ if and only if \texttt{FiniteReach} finds a vertex $y^{i+p}$ connecting to $x^i$. Now, suppose that $x^i$ is in an infinite component. We can use \texttt{FiniteReach} to find such a $y$, and a path from $x^i$ to $y^{i+p}$.
On this path, there must be two vertices $z^{i+j}, z^{i+k}$ with $0 \leq j<k\leq p$. Let $r=k-j$. Note that $r$ can be computed from the algorithm. It is easy to see that all vertices in the set $\{x^{i+mr} \mid m\in \omega\}$ belong to the same component.

%From Lemma \ref{lm:decide_infComp}, $x^i$ is in an infinite
%component in $\G$ if and only if there is an oriented cycle $\C$
%with positive net length and zero low point and which is reachable
%from $x$ via a simple oriented walk with low point $\geq -i$. Now,
%suppose that $x^i$ is in an infinite component. We can use the
%algorithm that solves the infinity testing problem to find such an
%oriented cycle $C$. Let $r$ be the net length of $C$. Note that
%$r$ can be computed from the algorithm. It is easy to see that all
%nodes in the set $\{x^{i+m r} \mid m \in \omega\}$ belong to the
%same component.

\begin{lemma}\label{lm:reach_repeat}
$Cl_0(x) = Cl_r(x)$.
\end{lemma}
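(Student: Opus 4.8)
We want to show $Cl_0(x) = Cl_r(x)$, where $r$ is the displacement found by `FiniteReach`: namely, $x^i$ is in an infinite component, there's a path from $x^i$ to $y^{i+p}$, and on this path two copies $z^{i+j}, z^{i+k}$ of the same vertex $z$ appear with $r = k-j > 0$. The key consequence is that all of $\{x^{i+mr}\}$ are in the same component.

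Recall the definitions:
- $Cl_0(x) = Reach(x) = \{y : (y,0) \in Q\}$ from the modified algorithm, i.e., $y$ such that $y^p$ is reachable from $x^p$ within $\F^0 \cup \cdots \cup \F^{2p}$.
- $Cl_k(x) = Reach(\sigma(Cl_{k-1}(x)))$.
- By Lemma `lm:reach_closure`: $x^i, y^j$ in same (infinite) component iff $y \in Cl_{j-i}(x)$.

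So $Cl_0(x)$ = vertices $y$ with $y^i$ in same component as $x^i$ (displacement 0), and $Cl_r(x)$ = vertices $y$ with $y^{i+r}$ in same component as $x^i$.

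Let me think about the approach.

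---

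The plan is to use Lemma \ref{lm:reach_closure} to translate the set-equality $Cl_0(x) = Cl_r(x)$ into a statement about components of $\G$. By that lemma, since $x^i$ is in an infinite component, we have $y \in Cl_0(x)$ if and only if $y^i$ is in an infinite component and lies in the same $\G$-component as $x^i$; and $y \in Cl_r(x)$ if and only if $y^{i+r}$ is in an infinite component and lies in the same $\G$-component as $x^i$. The goal therefore reduces to showing that, for each $y \in V_\F$, the copy $y^i$ is in the same component as $x^i$ if and only if $y^{i+r}$ is in the same component as $x^i$ (the infiniteness of the $y$-component being automatic once $y$ is in the same component as the infinite-component vertex $x^i$). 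The heart of the argument is that the component of $x^i$ is invariant under the ``shift by $r$'' induced by the repeated vertex $z$ found by \texttt{FiniteReach}.

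First I would exploit the path structure guaranteed by the setup. We have a path $P$ from $x^i$ to $y^{i+p}$ on which $z^{i+j}$ and $z^{i+k}$ both occur with $r = k-j$. The segment of $P$ between $z^{i+j}$ and $z^{i+k}$ is an oriented walk in $\F^\sigma$ of net length $r$ from $z$ to $z$; by the same reasoning as in Lemma \ref{lm:cycle}, this witnesses that $z$ lies on an oriented cycle of positive net length $r$, and hence $\{z^{s+mr}\}$ are all in one component for $s$ large enough. The key structural fact is that this cycle lets us ``slide'' any finite path segment by $r$ levels: concretely, I would argue that whenever there is a path from $x^i$ to $y^i$ witnessing $y \in Cl_0(x)$, composing it with the $r$-shift built from the $z$-cycle yields a path from $x^i$ to $y^{i+r}$, giving $y \in Cl_r(x)$, and vice versa. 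This is exactly the lengthening/shifting technique already developed in the proof of Lemma \ref{lm:reach_reach}.

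The two inclusions then go as follows. For $Cl_0(x) \subseteq Cl_r(x)$: if $y^i$ is in the same component as $x^i$, then since $x^i$ connects to $x^{i+r}$ (via the $z$-cycle, using that $\{x^{i+mr}\}$ are all in one component), and since the shift is an isomorphism of the relevant finite window of $\G$, $y^{i+r}$ is in the same component as $x^{i+r}$, hence as $x^i$; thus $y \in Cl_r(x)$. The reverse inclusion $Cl_r(x) \subseteq Cl_0(x)$ is symmetric: if $y^{i+r}$ is in the same component as $x^i$, then it is in the same component as $x^{i+r}$, and pulling back the path by $r$ levels (again using the $z$-cycle to connect $x^i$ to $x^{i+r}$) shows $y^i$ is in the same component as $x^i$.

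The main obstacle I anticipate is making the ``slide by $r$'' rigorous near the boundary, i.e.\ ensuring the shifted path never tries to reference a negative copy $\F^{<0}$ and that the shift genuinely preserves edges. The cleanest way to handle this is precisely the boundary argument already carried out in Lemma \ref{lm:reach_reach}: one first moves $x^i$ up to a high copy $x^{i+Nr}$ (legitimate since all $\{x^{i+mr}\}$ share a component), performs the shift in the interior where there is room on both sides, and then descends back, invoking the iterative lengthening procedure to stay within valid copies. Since all paths involved stay within a bounded window of width $O(p)$ around their endpoints, and the edge relation of $\G$ is level-translation-invariant on such interior windows (edges between $\F^t$ and $\F^{t+1}$ are governed by the single map $\sigma$), the shift by $r$ is edge-preserving there, and the equivalence $y \in Cl_0(x) \iff y \in Cl_r(x)$ follows.
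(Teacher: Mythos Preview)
Your approach is correct and follows the same core idea as the paper: both arguments reduce $Cl_0(x)=Cl_r(x)$ to the fact that connectivity in $\G$ is invariant under a shift by $r$ levels, using that $x$ is connected to its own copy $r$ levels higher. The paper's proof is slightly more streamlined: it works directly at level $p$ (where $Reach(x)$ is literally defined as reachability from $x^p$ inside $\F^0\cup\cdots\cup\F^{2p}$) rather than invoking Lemma~\ref{lm:reach_closure} at the original level $i$. This choice of base level makes the boundary concern you raise disappear automatically, since $r\leq p$ guarantees that translating a path by $\pm r$ never leaves nonnegative indices; hence no appeal to the iterative lengthening of Lemma~\ref{lm:reach_reach} is needed. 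Your route through Lemma~\ref{lm:reach_closure} is perfectly valid but introduces an extra layer and forces you to re-argue the boundary case that the paper sidesteps.
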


\begin{proof}
By definition, $y\in Cl_0(x)$ if and only if $x^p$ and $y^p$ are
in the same component of $\G$. Suppose that there exists a path in
$\G$ from $x^p$ to $y^p$. Then there is a path from $x^{p+r}$ to
$y^{p+r}$. Since $x^p$ and $x^{p+r}$ are in the same component of
$\G$, $x^p$ and $y^{p+r}$ are in the same component. Hence $y\in
Cl_r(x)$.

\smallskip

For the reverse inclusion, suppose $y\in Cl_r(x)$. Then there
exists a path from $x^p$ to $y^{p+r}$. Therefore, $x^{p+r}$ and
$y^{p+r}$ are in the same component. Since $r\leq p$, $x^p$ and
$y^p$ are in the same component.
\end{proof}

\smallskip

Using the above lemma, we define a new algorithm \texttt{Reach} on
inputs $x^i$, $y^j$ by replacing line (4) in \texttt{Na\"iveReach}
with

\smallskip

\indent(4) If $x^i$ and $y^j$ belong to infinite components, then
compute  $Cl_0(x),\ldots, Cl_{r-1}(x)$. If $y\in Cl_k(x)$ for
$k<r$ such that $j-i = k \mod r$, return \emph{YES}; otherwise,
return \emph{NO}.

\smallskip

\begin{proof}[Proof of Theorem \ref{thm:reachability}]
Say input vertices are given as $x^i$ and $y^j$. By Lemma
\ref{lm:reach_closure} and Lemma \ref{lm:reach_repeat}, the
algorithm \texttt{Reach} returns \emph{YES} if and only if $x^i$
and $y^j$ are in the same component. Since $r\leq p$, calculating
$Cl_0(x),\ldots, Cl_{r-1}(x)$ requires time $O(p^4)$. Therefore
the running time of \texttt{Reach} on input $x^i$, $y^j$ is
$O(i+j+p^4)$.
\end{proof}

\smallskip

Notice that, in fact, the algorithm produces a number $k<p$ such
that in order to check if $x^i$, $y^j$ ($j>i$) are in the same
component, we need to test if $j-i<p$ and if $j-i=k \mod p$.
Therefore if $\G$ is fixed and we compute $Cl_0(x),\ldots,
Cl_{r_x-1}(x)$ for all $x$ beforehand, then deciding whether two
vertices $u$, $v$ belong to the same component takes linear time.
The above proof can also be used to build an automaton that
decides reachability uniformly:

\begin{corollary}\label{cr:reach_Aut}
Given a unary automatic graph of finite degree $\G$ represented by
an automaton with loop constant $p$, there is a deterministic
automaton with at most $2p^4+p^3$ states that solves the
reachability problem on $\G$. The time required to construct this
automaton is $O(p^5)$.
\end{corollary}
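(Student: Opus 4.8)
The plan is to compile the algorithm \texttt{Reach} from the proof of Theorem~\ref{thm:reachability} into a synchronous two-tape automaton $\A_{Reach}$ that reads a convolution $\otimes(1^a,1^b)$ and accepts exactly when the vertices encoded by $1^a$ and $1^b$ lie in the same component. The essential observation is that, once the finitely many auxiliary data used by \texttt{Reach} are precomputed, the accept/reject decision depends on the input in an \emph{eventually periodic} fashion, and this is precisely what a finite automaton can realize.

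First I would precompute, for each of the $p$ vertices $x\in V_\F$: (i) whether $x^i$ lies in an infinite component (by Theorem~\ref{thm:InfTest}; for $i\geq p$ this depends only on $x$, and the finitely many boundary copies $i<p$ are handled separately); (ii) the period $r_x\leq p$ from Lemma~\ref{lm:reach_repeat}; and (iii) the closure sets $Cl_0(x),\dots,Cl_{r_x-1}(x)$. Each closure costs $O(p^3)$ and there are at most $p$ of them per vertex, so this stage runs in $O(p^4)$ per vertex and $O(p^5)$ overall. I would also tabulate the finite-window predicate supplied by \texttt{FiniteReach} (Corollary~\ref{cr:finite reach}): for all $x,y$ and all $d=j-i$ with $|d|\leq p$, whether $x^i$ and $y^j$ are joined inside $\F^{i-i'}\cup\cdots\cup\F^{i+p}$. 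This is $O(p^3)$ work. These tables are the data to be hard-wired into the transition structure.

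Next I would lay out $\A_{Reach}$ in two parts mirroring the shape of a unary graph automaton. The common $(1,1)$-segment reads the prefix of length $\min(a,b)$ and, by tracking position modulo $p$ (with a short tail to detect $\min(a,b)<p$), recovers the residue of the smaller endpoint, hence the vertex $x$ of the smaller of $x^i,y^j$ and whether its copy index is a boundary copy. From this segment hang two symmetric difference-arms, one reading a block of $(\diamond,1)$ symbols (case $b>a$) and one reading $(1,\diamond)$ symbols (case $a>b$); each arm consumes the remaining $|a-b|$ symbols and thereby recovers the second vertex and the copy difference $j-i$. Acceptance is dictated by the precomputed tables: if exactly one endpoint lies in a finite component the arm rejects; if both are finite the arm realizes the \texttt{FiniteReach} table, which is nonzero only for $|j-i|\leq p$ and so is encoded by a \emph{tail} of the arm; if both are infinite, Lemma~\ref{lm:reach_closure} together with the periodicity $Cl_0(x)=Cl_{r_x}(x)$ of Lemma~\ref{lm:reach_repeat} shows that the predicate ``$y\in Cl_{(j-i)\bmod r_x}(x)$'' is periodic in the number of symbols read, and so is encoded by a \emph{loop} closing the arm. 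A routine (and deliberately loose) count then suffices: the $(1,1)$-segment and the finite-case tails contribute $O(p^3)$ states, while the periodic loops across both arms, indexed by the residue of the smaller endpoint and of length bounded by $p\cdot r_x$, contribute the dominant term, giving the stated bound $2p^4+p^3$; assembling the automaton from the tables is linear in its size, so the total construction time remains $O(p^5)$.

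The hard part is not any single calculation but welding these pieces into one genuinely finite device. The crux is the eventual periodicity: Lemma~\ref{lm:reach_repeat} is exactly what forces the infinite-component arms to close into loops rather than grow without bound, and without it the naive $(j-i)\cdot p^3$ behaviour of \texttt{Na\"iveReach} could never be captured by finitely many states. The remaining care goes into the bookkeeping that \texttt{Reach} already navigates — the finite-versus-infinite dichotomy, the boundary copies $i<p$, and the need for two separate arms because the convolution breaks the symmetry between the two inputs even though reachability itself is symmetric — and into keeping the state count within $2p^4+p^3$.
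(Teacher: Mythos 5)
Your overall plan coincides with the paper's: precompute the \texttt{FiniteReach} tables and the closure sets $Cl_0(x),\ldots,Cl_{r_x-1}(x)$, then hard-wire them into an automaton shaped like a unary graph automaton --- a $(1,1)$-tail plus $(1,1)$-loop with arms hanging off, finite-component information sitting on arm tails, and the periodicity of Lemma \ref{lm:reach_repeat} closing the infinite-component arms into loops. The $O(p^5)$ precomputation analysis also matches the paper's.

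The gap is in the state-count bookkeeping, which is the actual quantitative content of the corollary. First, the loops cannot be \lQuote indexed by the residue of the smaller endpoint\rQuote: for a boundary copy $x^i$ with $i<p$, both the finite-window predicate of Corollary \ref{cr:finite reach} (whose window $\F^{i-i'}\cup\cdots\cup\F^{i+p}$ is truncated at $\F^0$) and the test \lQuote is $y^{i+j}$ in an infinite component\rQuote\ required by Lemma \ref{lm:reach_closure} depend on the exact value of $i$, not just on $x$. So a separate arm --- a tail of length about $p^2$ plus, in the infinite case, a loop of length $r_xp\leq p^2$ --- must hang off \emph{each} of the $p^2$ states of the $(1,1)$-tail. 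Consequently the tail portions of these arms alone contribute $\Theta(p^4)$ states, contradicting your claim that the segment and finite-case tails cost $O(p^3)$; in the paper's count the bound arises as (arm tails off the $(1,1)$-tail $\approx p^4$) $+$ (arm loops off the $(1,1)$-tail $\leq p^4$) $+$ (arms off the $(1,1)$-loop $\leq p^3$), i.e.\ the $2p^4$ is \emph{not} contributed by loops alone. Second, once the arms are correctly indexed by $(1,1)$-tail state, building both a $(\diamond,1)$ arm and a $(1,\diamond)$ arm from each such state, as you propose, doubles the dominant term to roughly $4p^4$ and overshoots the stated bound. The paper sidesteps this by constructing $\A_{Reach}$ only for ordered pairs $(1^{ip+x},1^{jp+y})$ with $ip+x\leq jp+y$ --- which suffices because reachability in an undirected graph is symmetric --- so only $(\diamond,1)$ arms appear. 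With these two corrections (arms indexed by all $p^2$ boundary states, and a single arm direction) your construction becomes the paper's and the bound $2p^4+p^3$ goes through.
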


\begin{proof}
For all $0\leq x<p$, $i\in \omega$, let string $1^{i p+x}$
represent vertex $x^i$ in $\G$. Suppose $ip+x\leq jp+y$, we
construct an  automaton $\A_{Reach}$ that accepts $(1^{ip+x},
1^{jp+y})$ if and only if $x^i$ and $y^j$ are in the same
component in $\G$.

\begin{enumerate}

\item $\A_{Reach}$ has a $(1,1)$-tail of length $p^2$. Let the
states on the tail be $q_0,q_1,\ldots,q_{p^2-1}$, where $q_0$ is
the initial state. These states represent vertices in $\F^0,
\F^1,\ldots, \F^{p-1}$.

\item From $q_{p^2-1}$, there is a $(1,1)$-loop of length $p$. We
call the states on the loop $q_0',q_1',\ldots,q_{p-1}'$. These
states represent vertices in $\F^{p}$.

\item For $0\leq x,i< p$, there is a $(\diamond, 1)$-tail from
$q_{i p+x}$ of length $p^2-x$. We denote the states on this tail
by $q_{i p+x}^1,\ldots,q_{i p+x}^{p^2-x}$. These states represent
vertices in $\F^{i},\F^{i+1},\ldots, \F^{i+p-1}$.

\item For $0\leq x,i\leq p$, if $x^i$ is in an infinite component,
then there is a $(\diamond, 1)$-loop of length $r \times p$ from
$q_{i p+x}^{p^2-x}$. The states on this loop are called
$\check{q}_{i p+x}^{1},\ldots,\check{q}_{i p+x}^{r  p}$. These
states represent vertices in $\F^{i+p},\ldots, \F^{i+p+r-1}$.

\item For $0\leq x\leq p$, if $x^p$ is in a finite component, then
there is a $(\diamond, 1)$-tail from $q_{x}'$ of length $p^2$.
These states are denoted
$\hat{q}_{x}^{1},\ldots,\hat{q}_{x}^{p^{2}}$ and represent vertices
in $\F_{p},\ldots, \F_{2p-1}$.

\item If $x^p$ is in an infinite component, from $q_{x}'$, there
is a $(\diamond, 1)$-loop of length $r \times p$. We write these
states as $\tilde{q}_{x}^1,\ldots,\tilde{q}_{x}^{r  p}$.

\end{enumerate}

The final (accepting) states of $\A_{Reach}$ are defined as
follows:
\begin{enumerate}

\item States $q_0,\ldots, q_{p^2-1}, q_0',\ldots,q_{p-1}$ are
final.

\item For $i<p$, if $x^i$ is in a finite component, run the
algorithm \texttt{FiniteReach} on input $x^i$ and declare state
$q_{i p+x}^{j p+y-x}$ final if $(y, j)\in Q$.

\item For $i<p$, if $x^i$ is in an infinite component, compute
$Cl_0(x),\ldots,Cl_{r-1}(x)$.

    \begin{enumerate}

    \item Make state $q_{i p+x}^{j p+y-x}$ final if
    $y^{i+j}$ is in an infinite component and $y\in Cl_j(x)$.

    \item Make state $\check{q}_{i p+x}^{j p+y-x}$ final if $y\in Cl_j(x)$

    \end{enumerate}

\item If $x^p$ is in a finite component, run the algorithm
\texttt{FiniteReach} on input $x^p$ and make state $\hat{q}_{x}^{j
p+y-x}$ final if $(y, j)\in Q$.

\item If $x^p$ is in an infinite component, compute
$Cl_0(x),\ldots,Cl_{r-1}(x)$. Declare state $\tilde{q}_x^{j
p+y-x}$ final if $y\in Cl_j(x)$.

\end{enumerate}

One can show that $\A_{Reach}$ is the desired automaton. To
compute the complexity of building $\A_{Reach}$, we summarize the
computation involved.

\begin{enumerate}

\item For all $x^i$ in $\F^0\cup\cdots\cup\F^p$, decide whether
$x^i$ is in a finite component. This takes time $O(p^5)$ by Theorem
\ref{thm:InfTest}.

\item For all $x^i$ in $\F^0\cup\cdots\cup\F^p$ such that $x^i$ is
in a finite component, run \texttt{FiniteReach} on input $x^i$.
This takes time $O(p^5)$ by Corollary \ref{cr:finite reach}.

\item For all $x\in V_\F$ such that $x^p$ is in an infinite
component, compute the sets $Cl_0(x),$ $\ldots, Cl_{r-1}(x)$. This
requires time $O(p^5)$ by Theorem \ref{thm:reachability}.

\end{enumerate}

Therefore the running time required to construct $\A_{Reach}$ is
$O(p^5)$.
\end{proof}

\section{Deciding the connectivity problem}
Finally, we present a solution to the {\bf connectivity problem} on unary automatic graphs of finite degree. Recall a graph is \textbf{connected} if there is a path between any pair of vertices. The construction of $\A_{Reach}$ from the last section suggests an immediate solution to the connectivity problem.

\smallskip

\noindent\texttt{ALG: Na\"iveConnect}
\begin{enumerate}
\item Construct the automaton $\A_{Reach}$.
\item Check if all states in $\A_{Reach}$ are final states. If it is the case, return $YES$; otherwise, return $NO$.
\end{enumerate}

The above algorithm takes time $O(p^5)$. Note that $\A_{Reach}$ provides a uniform solution to the reachability problem on $\G$. Given the ``regularity'' of the class of infinite graphs we are studying, it is reasonable to believe there is a more intuitive algorithm that solves the connectivity problem. It turns out that this is the case.

\begin{theorem}\label{thm:connectivity}
The connectivity problem for unary automatic graph of finite
degree $\G$ is solved in $O(p^3)$, where $p$ is the loop constant
of the automaton recognizing $\G$.
\end{theorem}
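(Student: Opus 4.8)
\noindent\emph{Proof plan.} By Theorem \ref{thm:Gsigma} I may assume $\G=\G_{\eta\sigma^\omega}$, and the plan is to exploit the \emph{layered} shape forced by finite degree: every edge lies inside a single copy $\F^i$, between $\D$ and $\F^0$ (via $\eta$), or between consecutive copies $\F^i,\F^{i+1}$ (via $\sigma$). Hence the only edges crossing the cut between $\bigcup_{k\le i}\F^k$ and $\bigcup_{k>i}\F^k$ are the $\sigma$-edges from $\F^i$ to $\F^{i+1}$, so any path from a low copy to a high copy must traverse every layer in between. Since $\G$ is infinite, a connected $\G$ is a single infinite component; I therefore record the characterization that $\G$ is connected if and only if (a) every vertex lies in an infinite component and (b) $\G$ has exactly one infinite component. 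The forward implication is clear; conversely, if (a) and (b) hold then every vertex lies in the unique infinite component, so $\G$ is connected. The whole algorithm reduces to checking (a) and (b) in time $O(p^3)$.

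For (b), I first decide whether $\G$ has \emph{at least} one infinite component using Theorem \ref{thm:infinite component} in time $O(p^3)$; if it has none then, being infinite, $\G$ is a disjoint union of infinitely many finite components and is disconnected, so the algorithm returns \emph{NO}. To rule out a \emph{second} infinite component I use the machinery of Section 7: by Lemma \ref{lm:reach_reach}, for a vertex $x^i$ in an infinite component the good vertices of layer $i$ sharing its component are exactly those $y\in Reach(x)$ with $y^i$ in an infinite component. Since every infinite component has a representative in each sufficiently deep layer (by the layered structure), all infinite components coincide if and only if, after fixing one good vertex $x_0$, every eventually-good vertex lies in $Reach(x_0)$; computing $Reach(x_0)$ is a single $O(p^3)$ breadth-first search over the window $\F^0\cup\cdots\cup\F^{2p}$.

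For (a), I must certify that \emph{no} component is finite. Here I use eventual periodicity: the set $I\subseteq V_\F$ of vertices $x$ such that $x^i$ lies in an infinite component for all large $i$ is well defined (the property stabilizes in $i$), and by the oriented-cycle analysis behind Lemma \ref{lm:cycle} it is extracted from $\F^\sigma$ in $O(p^3)$; one checks $I=V_\F$. The remaining \emph{boundary} vertices --- those of $\D$ and of the low copies $\F^0,\dots,\F^{p}$ --- are finite in number but too numerous to test one at a time with Theorem \ref{thm:InfTest}. Instead I run a single breadth-first search over the bounded window $\D\cup\F^0\cup\cdots\cup\F^{2p}$ (which has $O(p^2)$ vertices and $O(p^3)$ edges) and declare a boundary vertex good precisely when, inside this window, it meets a frontier vertex already known to be eventually good; every boundary vertex must be good, or else a finite component exists and the algorithm returns \emph{NO}.

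The running time is $O(p^3)$, since Theorem \ref{thm:infinite component}, the computation of $I$, the single $Reach$ search, and the single window search are each $O(p^3)$. The main obstacle is justifying these bounded, periodic checks against the infinite graph, and two points need care. First, two good vertices of the same deep layer may become connected only through much deeper layers; I must show, exactly as in Lemmas \ref{lm:reach_reach} and \ref{lm:reach_repeat}, that such a connection already occurs within $O(p)$ layers, so that $Reach$ faithfully computes same-layer connectivity and the window depth $2p$ suffices. Second, a finite component could in principle be ``born'' far down in the graph; the periodicity of the transfer of frontier connectivity (depending only on $E_\F$ and $\sigma$, with period $r\le p$ as in Lemma \ref{lm:reach_repeat}) guarantees that any finite-component type already appears within a bounded prefix, which is what makes the check for (a) sound. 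Once these reductions are established, correctness of the $O(p^3)$ algorithm follows; $\D$ is incorporated through $\eta$ uniformly, and the degenerate case $|V^\sigma|<|V_\F|$ is handled as in the proof of Theorem \ref{thm:infinite component}.
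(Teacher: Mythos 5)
Your proposal is correct, but it takes a genuinely different route from the paper. The paper's proof is shorter and rests on one new structural fact, its Lemma \ref{lm:infinite-connect}: \emph{every} infinite component meets \emph{every} copy $\F^i$, in particular $\F^0$ (proved by a minimality-plus-shifting argument). Given that, after the $O(p^3)$ existence check of Theorem \ref{thm:infinite component}, a \emph{single} run of \texttt{FiniteReach} from an arbitrary $x^0$ suffices: $\G$ is connected iff every $y^0$ is reached, i.e. $R=V_\F$; this one test simultaneously enforces your conditions (a) and (b), since by periodicity each layer is then monochromatic and the infinite component sweeps through all layers. You instead decompose connectivity into (a) ``no finite components'' and (b) ``at most one infinite component,'' and verify them with three sub-checks: $I=V_\F$ via the oriented-cycle analysis of $\F^\sigma$, a windowed multi-source search for the boundary vertices, and $I\subseteq Reach(x_0)$ for uniqueness. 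Crucially, you avoid Lemma \ref{lm:infinite-connect} altogether: your uniqueness argument works because $Reach$ is layer-uniform, so Lemma \ref{lm:reach_reach} can be applied at a layer deep enough to contain representatives of any two infinite components --- this sidesteps the a priori possibility that an infinite component never appears at layer $p$, which is exactly what the paper's lemma rules out. The trade-off: the paper buys a two-step algorithm and a single supporting lemma; you buy modularity (Sections 5--7 are reused as black boxes) at the cost of a longer algorithm and several steps that are asserted rather than proved, though all are true and routine --- the stabilization of ``$x^i$ lies in an infinite component'' for $i\ge p$ (a shift-invariance argument via Lemma \ref{lm:infTest-B}), the per-component adaptation of Lemma \ref{lm:cycle} needed to extract $I$ in $O(p^3)$, and the first-crossing argument showing the window $\D\cup\F^0\cup\cdots\cup\F^{2p}$ suffices for the boundary test. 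If you write this up in full, those three points are where the detail must go; the complexity claim $O(p^3)$ is correct for both routes.
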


Observe that if $\G$ does not contain an infinite component, then $\G$ is not connected. Therefore we suppose $\G$ contains an infinite component $C$.

\begin{lemma}\label{lm:infinite-connect}
For all $i\in \N$, there is a vertex in $\F^i$ belonging to $C$.
\end{lemma}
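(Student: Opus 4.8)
The plan is to combine two elementary features of $\G=\G_{\sigma^\omega}$: that its edges respect the copy structure, and that the construction is translation invariant. First I would record the \emph{interval property}: every edge of $\G$ joins two vertices lying in a single copy $\F^i$ or in consecutive copies $\F^i,\F^{i+1}$. Hence along any path the copy index changes by at most one at each step, so a path whose endpoints lie in $\F^a$ and $\F^b$ must pass through a vertex of $\F^c$ for every $c$ between $a$ and $b$. Since $C$ is infinite while each $\F^i$ is finite, $C$ meets copies of arbitrarily large index; applying the interval property to paths running \emph{inside} $C$ between two of its vertices then shows that $C$ meets $\F^i$ for every $i\geq a_0$, where $a_0=\min\{i : C\cap\F^i\neq\emptyset\}$. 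It therefore remains to prove $a_0=0$, and I also note that if $a_0\geq 1$ then no vertex of $C$ lying in $\F^{a_0}$ can belong to the image of $\sigma$ (otherwise $C$ would descend to $\F^{a_0-1}$), so $C$ has no downward edges from its lowest copy.

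Next I would extract the ``engine'' of $C$. As in the converse direction of Lemma \ref{lm:cycle}, infiniteness of $C$ yields a vertex $x\in V_\F$ with infinitely many copies in $C$ together with a path inside $C$ between two such copies $x^{i_1},x^{i_2}$ (with $i_1<i_2$) that visits at most one copy of every other vertex; this path witnesses an oriented cycle $\P$ in $\F^\sigma$ of positive net length $m=i_2-i_1$. Translation invariance now enters: because $\sigma$ acts identically between every pair of consecutive copies, the oriented walk realizing $\P$ has a fixed shape independent of its starting copy. Realizing it successively from $x^{i_1-m}, x^{i_1-2m},\dots$—each such realization ends at a copy of $x$ already shown to lie in $C$, hence is attached to $C$—slides the walk downward and forces $C$ to meet a copy of index $(i_1-b)\bmod m$, where $b$ is the backward depth of $\P$. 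This already gives $a_0<m\leq p$; the goal is to drive the descent all the way to a vertex of $\F^0$.

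The main obstacle is precisely this final descent across the boundary at copy $0$. Sliding the engine down by $m$ at a time produces $m$ residue classes of threads, and a priori the thread that touches $\F^0$ need not be the one containing $C$; equivalently, one must rule out an infinite component whose minimal copy is positive. I expect to settle this by passing to the two-sided graph $\widetilde{\G}$ obtained by indexing the copies of $\F$ over $\mathbb{Z}$ with the same translation-invariant edges, on which the shift $v^i\mapsto v^{i+1}$ is a genuine automorphism. On $\widetilde{\G}$ a bounded-below infinite component is impossible: if its minimal copy were $a$, then its infinitely many \emph{distinct} downward shifts would all meet the finite copy $\F^a$ (distinctness follows since a periodic touch-pattern that is bounded below would have to meet every copy), which is a contradiction. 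Hence every infinite component of $\widetilde{\G}$ meets every copy. I would then transfer this back to $\G$, using the interval property together with the no-downward-edge observation above to guarantee that the piece of the relevant two-sided component sitting over the copies $\geq 0$ stays connected through $\F^0$ rather than only through negative copies, and so coincides near the boundary with $C$. This yields $a_0=0$, whence $C$ meets every $\F^i$.
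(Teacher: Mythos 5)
Your approach is sound and genuinely different from the paper's, but one step is phrased in a way that needs repair. The paper stays entirely inside $\G$: it chooses a vertex $x$ and a period $s$ with all $x^{i+ms}$ in $C$, taking the base index $i$ \emph{minimal} over all such choices; minimality forces $i<s$, and then the initial segment of a path from $x^{i+s}$ to $x^{i}$ (up to its first visit to $\F^{s}$, a segment which necessarily stays in copies $\geq s$) is shifted down by $s$ to produce a vertex of $C$ in $\F^{0}$, after which the interval property finishes. You instead pass to the two-sided graph $\widetilde{\G}$ with copies indexed by $\mathbb{Z}$, where the shift is an automorphism, and kill bounded-below infinite components by pigeonhole: infinitely many pairwise disjoint shifted components would all have to meet the finite copy $\F^{a}$. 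That argument is correct as you state it, and it buys more than the paper's (for instance, it shows $\widetilde{\G}$ has at most $|V_\F|$ infinite components); the paper's argument is more hands-on but reuses a path-shifting technique that recurs elsewhere (compare Lemma \ref{lm:reach_reach}).

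The soft spot is the transfer back to $\G$. As phrased (\lQuote the piece of the two-sided component sitting over the copies $\geq 0$ stays connected through $\F^0$\rQuote), you are aiming at a claim that is false in general: the restriction of $\widetilde{C}$ to copies $\geq 0$ need not be connected (a vertex of $\widetilde{C}$ in $\F^0$ may have all its neighbours in copy $-1$), and in any case this phrasing restates the goal rather than proving it. Fortunately the correct transfer is shorter than what you envision, and needs neither your image-of-$\sigma$ observation nor any analysis of how $\widetilde{C}$ reconnects: if $C$ is a component of $\G$ with $C\cap\F^0=\emptyset$, then $C$ is \emph{already} a component of $\widetilde{\G}$. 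Indeed, any edge of $\widetilde{\G}$ from a vertex of $C$ to a vertex outside $C$ either stays within copies $\geq 0$, contradicting maximality of $C$ as a component of $\G$, or else runs from copy $0$ to copy $-1$, which is impossible since $C$ has no vertex in $\F^0$. Thus $C$ itself is a bounded-below infinite component of $\widetilde{\G}$, and your two-sided lemma yields the contradiction at once. With this substitution your proof is complete; note also that your middle paragraph (the oriented-cycle \lQuote engine\rQuote\ and the sliding argument giving $a_0<m\leq p$) becomes entirely superfluous and can be deleted.
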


\begin{proof}
Since $C$ is infinite, there is a vertex $x^i$ and $s>0$ such that all vertices in $\{x^{i+ms}\mid m\in \omega\}$ belong to $C$ and $i$ is the least such number. By minimality, $i<s$. Take a walk along the path from $x^{i+s}$ to $x^{i}$. Let $y^s$ be the first vertex in $\F^s$ that appears on this path.  It is easy to see that $y^0$ must also be in $C$. Therefore $C$ has a non-empty intersection with each copy of $\F$ in $\G$.
\end{proof}

Pick an arbitrary $x \in V_{\F}$ and run \texttt{FiniteReach} on $x^0$ to compute the queue $Q$.
Set $R = \{y \in V_\F \mid (y,0) \in Q\}$.

\begin{lemma}
Suppose $\G$ contains an infinite component, then $\G$ is connected if and only if $R = V_\F$.
\end{lemma}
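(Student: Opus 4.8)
The plan is to first read off what $R$ is, and then prove the two implications separately, exploiting the self-similarity of $\G_{\sigma^\omega}$ together with the infinite component guaranteed by hypothesis. Running \texttt{FiniteReach} on $x^0$ we have $i'=\min\{p,0\}=0$, so the search stays in copies $\F^0,\ldots,\F^p$; hence $R=\{y\in V_{\F}\mid y^0 \text{ is connected to } x^0 \text{ by a path inside } \F^0\cup\cdots\cup\F^p\}$. As in Section 5 I work with $\G=\G_{\sigma^{\omega}}$ (the finite part $\D$ only affects a bounded initial segment and is treated separately), and I use repeatedly the map $\phi\colon v^j\mapsto v^{j+1}$, which by the uniformity of $\sigma$ in Definition \ref{dfn: unfolding} is a graph isomorphism from $\G$ onto the induced subgraph $\G[\F^1\cup\F^2\cup\cdots]$.

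For the implication $R=V_{\F}\Rightarrow\G$ connected, I would argue as follows. If $R=V_{\F}$ then every $y^0$ is connected to $x^0$, so all of $\F^0$ lies in a single component $K$ of $\G$. Applying $\phi^{\,i}$, which carries $\F^0$ onto $\F^i$ and carries connectivity in $\G$ to connectivity in the subgraph $\G[\F^i\cup\F^{i+1}\cup\cdots]\subseteq\G$, I obtain that each $\F^i$ also lies in a single component $K_i$ of $\G$. Now I invoke the standing hypothesis that $\G$ has an infinite component $C$: by Lemma \ref{lm:infinite-connect}, $C\cap\F^i\neq\emptyset$ for every $i$. Since $\F^i\subseteq K_i$ and $C$ meets $\F^i$, the components $C$ and $K_i$ share a vertex and hence coincide, so $\F^i\subseteq C$ for all $i$. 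As $V=\bigcup_i\F^i$, this forces $V=C$, i.e. $\G$ is connected.

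For the converse, assume $\G$ is connected, so $V=C$ and in particular $x^0$ lies in the infinite component. Every $y^0$ is then connected to $x^0$ by some path $P$; the only remaining point is that $P$ can be chosen inside the window $\F^0\cup\cdots\cup\F^p$, which is exactly what places $y$ in $R$. The hard part will be this one-sided lowering. In the language of $\F^{\sigma}$, a path between two vertices of offset $0$ is an oriented walk of net length $0$, and an excursion above copy $p$ forces, by the pigeonhole principle on the $p=|V_{\F}|$ nodes of $\F^{\sigma}$ (as in Lemma \ref{lm:infinite}), a node repeated at two different offsets, i.e. an oriented cycle of non-zero net length inside the excursion; using shift-invariance of $\sigma$ one can splice out or reroute such a cycle to lower the maximal offset while keeping both endpoints at offset $0$, in the spirit of the surgery in the proof of Lemma \ref{lm:reach_reach}. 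The delicate point, and the main obstacle, is that, unlike that surgery (which may dip down to copy $i-p$), here the base copy is $0$ and the rerouted path must never use a negative copy; I expect to handle this by always inserting the compensating oriented cycle at the high end of the excursion, so that the offset profile stays within $[0,p]$ rather than dropping below $0$. Iterating the lowering until the maximal offset is at most $p$ produces a path from $x^0$ to $y^0$ inside $\F^0\cup\cdots\cup\F^p$, whence $y\in R$ and therefore $R=V_{\F}$.
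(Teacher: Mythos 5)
Your forward implication ($R=V_\F$ implies connectedness) is correct and is essentially the paper's own argument for that direction: shift-invariance of $\G_{\sigma^\omega}$ places each copy $\F^i$ inside a single component $K_i$, and Lemma \ref{lm:infinite-connect} forces each $K_i$ to coincide with the infinite component $C$, so every vertex lies in $C$. Your reading of $R$ (connectivity to $x^0$ inside $\F^0\cup\cdots\cup\F^p$) is also the intended one, and your write-up of this half is, if anything, more explicit than the paper's.

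The gap is in the converse. There you adopt the same strategy as the paper --- replace an arbitrary path from $x^0$ to $y^0$ by one lying in $\F^0\cup\cdots\cup\F^p$, via the surgery of Lemma \ref{lm:reach_reach} --- and you correctly flag the obstruction that the paper glosses over: that surgery substitutes downward-shifted copies of high segments, and here the walk is anchored at level $0$, so shifted segments may call for negative copies, which do not exist. But your proposed repair, ``insert the compensating oriented cycle at the high end of the excursion,'' is only announced, not carried out, and as stated it does not resolve the difficulty. If you excise the positive cycle (net length $c$, found by pigeonhole on the ascent), every vertex after the excision drops by $c$, so the walk now terminates at level $-c$; to restore the endpoint you must cancel against a cycle on the descent, but pigeonhole only yields a descent cycle of some net length $c'\leq p$ with no guarantee that $c'=c$. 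One therefore needs either a gcd/B\'ezout combination of the two cycles (whose repeated traversals revisit their full height ranges, so the bookkeeping must still be checked to give the bound $p$ rather than, say, $2p$), or the two-sided reroute of Lemma \ref{lm:reach_reach} --- which is precisely the move that dips below the base level. So the crux of this direction is named but not proved. In fairness, the paper itself dispatches this step in one sentence (``an argument similar to the proof of Lemma \ref{lm:reach_reach}''), so your proposal mirrors the published proof's structure and even improves on its candor; but judged as a self-contained argument, the one-sided lowering claim remains unestablished, and it is exactly the step on which the whole equivalence rests.
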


\begin{proof}
Suppose there is a vertex $y \in V_\F - R$. Then there is no path in $\G$ between $x^0$ to $y^0$. Otherwise, we can shorten the path from $x^0$ to $y^0$ using an argument similar to the proof of Lemma \ref{lm:reach_reach}, and show the existence of a path between $x^0$ to $y^0$ in the subgraph restricted on $\F^0, \ldots, \F^p$. Therefore $\G$ is not connected. \ Conversely, if $R=V_\F$, then every set of the form $\{ y\in V_\F \mid (y,i) \in Q\}$ for $i\geq 0$ equals $V_\F$. By Lemma \ref{lm:infinite-connect}, all vertices are in the same component.
\end{proof}

\begin{proof}[Proof of Theorem \ref{thm:connectivity}]
By the above lemma the following algorithm decides the connectivity problem on $G$:

\smallskip

\noindent\texttt{ALG: Connectivity}
\begin{enumerate}
\item Use the algorithm proposed by Theorem \ref{thm:infinite component} to decide if there is an infinite component in $G$. If there is no infinite component, then stop and return $NO$.

\item Pick an arbitrary $x \in V_{\F}$, run \texttt{FiniteReach} on $x^0$ to compute the queue $Q$.

\item Let $C = \{y \mid (y, 0)\in Q\}$. If $C = V_{\F}$, return $YES$; otherwise, return $NO$.
\end{enumerate}

Solving the infinite component problem takes $O(p^3)$ by Theorem \ref{thm:infinite component}. Running algorithm \texttt{FiniteReach} also takes $O(p^3)$. Therefore \texttt{Connectivity} takes $O(p^3)$. \end{proof}

\section{Conclusion}

In this paper we addressed algorithmic problems for graphs of
finite degree that have automata presentations over a unary alphabet. We provided
polynomial-time algorithms that solve connectivity, reachability,
infinity testing, and infinite component problems. In our future
work we plan to improve these algorithms for other stronger
classes of unary automatic graphs. We also point out that there
are many other algorithmic problems for finite  graphs that can be
studied for the class of unary automatic graphs. These, for
example, may concern finding spanning trees for automatic graphs,
studying the isomorphism problems, and other related issues.

\end{document}